\DeclareMathOperator{\Acc}{Acc}
\DeclareMathOperator{\CCa}{CC_A}
\DeclareMathOperator{\CCatn}{CC_A^{tn}}
\DeclareMathOperator{\Obs}{Obs}
\DeclareMathOperator{\spec}{spec}
\newcommand{\Z}{\mathbb{Z}}
\newcommand{\N}{\mathbb{N}}
\newcommand{\LM}{\mathcal{L}}
\newcommand{\T}{\mathbb{T}}
\newcommand{\Tt}{\mathcal{T}}
\newcommand{\ep}{\epsilon}
\newcommand{\Scal}{\mathcal{S}}
\newcommand{\Sig}{\Sigma}
\newcommand{\s}{\sigma}
\newcommand{\Mt}{\mathcal{M}}
\newtheorem{assumption}{Assumption}
\newtheorem{remark}{Remark}
\newtheorem{example}{Example}
\newcommand{\subalign}[1]{%
  \vcenter{%
    \Let@ \restore@math@cr \default@tag
    \baselineskip\fontdimen10 \scriptfont\tw@
    \advance\baselineskip\fontdimen12 \scriptfont\tw@
    \lineskip\thr@@\fontdimen8 \scriptfont\thr@@
    \lineskiplimit\lineskip
    \ialign{\hfil$\m@th\scriptstyle##$&$\m@th\scriptstyle{}##$\crcr
      #1\crcr
    }%
  }
}
\definecolor{green}{rgb}{0.1,0.7,0.1}
\tikzset{
node distance=3cm, 
every state/.style={thick, fill=gray!10}, 
initial text=$ $, 
}
\begin{document}

\tikzset{elliptic state/.style={draw,ellipse}}

\title{Revisiting delayed strong detectability of discrete-event systems\thanks{
A short version \cite{Zhang2019KDelayStrDetDES} of this paper has been accepted by
the 58th IEEE Conference on Decision and Control (2019),
where the conference version only contains the verification results of delayed strong detectability
in the context of $\omega$-languages, i.e., results in Sections~\ref{subsec:omegaKDelaySD}
and ~\ref{subsec:omegaK1K2Det}.
}
}


\author{Kuize Zhang \and
        Alessandro Giua
}


\institute{K. Zhang \at
			  School of Electrical Engineering and Computer Science, KTH Royal Institute of Technology, 10044 Stockholm, Sweden\\
              \email{kuzhan@kth.se}           
           \and
           A. Giua \at
			  Department of Electrical and Electronic Engineering, University of Cagliari, 09123 Cagliari, Italy\\
			  \email{giua@diee.unica.it}
}

\date{Received: date / Accepted: date}

\maketitle

\begin{abstract}
Among notions of detectability for a discrete-event system (DES), strong detectability implies that
	after a finite number of observations to every output/label sequence generated by the DES,
	the current state can be uniquely determined. This notion is strong so that by using it the current state 
	can be easily determined. In order to keep the advantage of strong detectability and weaken its disadvantage,
	we can additionally take some ``subsequent outputs'' into account in order to determine the current state.
	Such a modified observation will make some DES that is not strongly detectable become ``strongly detectable
	in a weaker sense'', which we call ``{\it $K$-delayed strong detectability}'' if we observe at least $K$
	outputs after the time at which the state need to be determined. 
	In this paper, we study $K$-delayed strong detectability for DESs modeled by finite-state automata (FSAs),
	and give a polynomial-time verification algorithm by using a novel concurrent-composition method.
	Note that the algorithm applies to all FSAs. Also by the method, an upper bound for $K$ has been found,
	and we also obtain polynomial-time
	verification algorithms for $(k_1,k_2)$-detectability 
	and $(k_1,k_2)$-D-detectability of FSAs firstly studied by [Shu and Lin, 2013].
	Our algorithms run in quartic polynomial time and apply to all FSAs,
	are more effective than the sextic polynomial-time 
	verification algorithms given by [Shu and Lin 2013] based on
	the usual assumptions of deadlock-freeness and having no unobservable reachable cycle.
	Finally, we obtain polynomial-time synthesis algorithms for enforcing delayed strong detectability, 
	which are more effective than the exponential-time synthesis algorithms in the supervisory
	control framework in the literature.

	\keywords{Discrete-event system \and Finite-state automaton \and Delayed strong detectability \and Verification \and Synthesis}
\end{abstract}

\section{Introduction}

{\it Detectability} is a basic property of dynamic systems: when it holds an observer can use the \emph{current} and \emph{past} values of the observed output/label sequence produced by a system to reconstruct its \emph{current} state
\cite{Shu2007Detectability_DES,Shu2011GDetectabilityDES,Shu2013DelayedDetectabilityDES,Yin2017DetectabilityModularDES,Zhang2017PSPACEHardnessWeakDetectabilityDES,Masopust2018ComplexityDetectabilityDES,Keroglou2015DetectabilityStochDES}.
This property plays a fundamental role in many related
control problems such as observer design and controller synthesis. 
On the other hand, detectability is
strongly related to many cyber-security properties. For example, the property of opacity, which has been originally proposed to describe information flow
security  in computer science in the early 2000s \cite{Mazare2004Opacity} can be seen as the absence of detectability.
As another example, the detection and identification of cyber-attacks is just a particular application
of detectability analysis \cite{Pasqualetti2013AttachDetectionCPS}.

For {\it discrete-event systems} (DESs) modeled by
{\it finite-state automata}\footnote{obtained from
a standard finite automaton \cite{Sipser2006TheoryofComputation} by removing all accepting states,
replacing a unique initial state
by a set of initial states, and adding a labeling function} (FSAs),
the detectability problem has been widely studied
\cite{Shu2007Detectability_DES,Shu2011GDetectabilityDES,Zhang2017PSPACEHardnessWeakDetectabilityDES,Masopust2018ComplexityDetectabilityDES,Shu2013DelayedDetectabilityDES} 
in the context of {\it $\omega$-languages}, i.e., taking into account all output sequences
of infinite length generated by a DES. These results are usually based on two assumptions that a system is deadlock-free and that it cannot generate an infinitely long subsequence of unobservable events. These requirements are collected in Assumption~\ref{assum1_Det_PN}: when it holds, a system will always run and generate an infinitely long observation sequence.

Two fundamental definitions are those of {\it strong detectability} and {\it weak detectability}
originally studied in \cite{Shu2007Detectability_DES}.
Strong detectability implies
that
  there exists a positive integer $k$ such that for all infinite output sequences $\sigma$ generated by a system, all  prefixes of $\sigma$ of length greater than $k$  allow reconstructing the current states.
Weak detectability implies that
  there exists a positive integer $k$ and some infinite output sequence $\sigma$ generated by a system such that all prefixes of $\sigma$ of length greater than $k$ allow reconstructing the current states.
It is not difficult to see that weak detectability is strictly weaker than strong detectability.
Strong detectability can be verified in polynomial time while weak detectability can
be verified in exponential time \cite{Shu2007Detectability_DES,Shu2011GDetectabilityDES}
under the usual Assumption~\ref{assum1_Det_PN}. 
In addition, checking weak detectability is PSPACE-complete in the numbers of states and events
for FSAs also under Assumption \ref{assum1_Det_PN}
\cite{Zhang2017PSPACEHardnessWeakDetectabilityDES,Masopust2018ComplexityDetectabilityDES}.
For a comprehensive introduction of various notions of strong and weak detectability of
FSAs based on Assumption \ref{assum1_Det_PN}, 
we refer the reader to \cite{Hadjicostis2020DESbook}. For a brief introduction to these
topics without any assumption, 
we refer the reader to \cite{Zhang2020bookDDS}.

The above case that $\omega$-languages are considered can well describe long-term behavior
of DESs. However, sometimes one needs to consider not only long-term behavior but also
short behavior, in this case languages (in this paper languages refer to as a set of 
words (i.e., finite-length sequences) over an alphabet) are considered. In this case,
strong detectability implies that there exists a positive integer $k$ such that for all finite
output sequences $\sigma$ generated by a system, all prefixes of $\sigma$ of length greater than
$k$  allow reconstructing the current states, and weak detectability implies that 
there exists a positive integer $k$ and some finite output sequence $\sigma$ generated by a system
such that all prefixes of $\sigma$ of length greater than $k$ allow reconstructing the current states.
In order to distinguish these notions we call strong (resp. weak) detectability in the context of 
$\omega$-languages {\it $\omega$-strong (resp. weak) detectability}, and similarly
call strong (resp. weak) detectability in the context of languages {\it $*$-strong (resp. weak)
detectability} (since languages are subsets of $\Sig^*$ for some alphabet $\Sig$).
By definition, $*$-strong detectability is stronger than $\omega$-strong
detectability. The polynomial-time verification method given in \cite{Shu2011GDetectabilityDES}
can be used to verify $*$-strong detectability of all FSAs.

In this paper, we study the verification problem and synthesis problem for delayed strong detectability
of FSAs.
Delays may appear in the observation to a cyber-physical system, because signal transmission
through a communication network takes a non-negligible time.
For example, when we observed a label generated by a DES at time $t$, the event that generates
the label may have occurred before time $t$. But in this paper, the notion of delay has a different meaning, i.e.,
when we observed a sequence $\s$ of labels from the starting of a DES, and we also
observed at least a number $K$ of subsequent labels, can we determine what the state was when $\s$
had just been generated? When $K=0$, it becomes
the conventional problem of current-state estimation,
i.e., using an observed label sequence to determine the current state.
One directly sees that the above ``delayed computation'' will enforce the possibility of determining 
the state when $\s$ has just been observed, because we can use more observed information.

Delayed computation may enforce detectability in some cases.
Consider the motivating example shown in Fig.~\ref{fig24_Det_PN}. One directly sees that this FSA
generates only infinite label sequence $(ab)^{\omega}$, i.e., the infinite label sequence consisting of 
infinitely many copies of $ab$. For every natural number $n$, when label sequence $(ab)^na$ has been
observed, then the FSA can be in only state $s_1$; but when sequence $(ab)^{n+1}$ has been observed,
the FSA can be in state $s_0$ or state $s_2$. That is, the FSA is not $\omega$-detectable. However, if we 
consider $1$-delayed computation, then the FSA will become $\omega$-detectable: 
when $(ab)^{n}a$ has been observed,
we know after $(ab)^n$, the FSA can be only in state $s_0$.
This yields a more general notion of detectability which we call {\it $K$-delayed strong detectability in 
the context of $\omega$-languages} ({\it $\omega$-$K$-delayed strong detectability} for short).
In particular, strong detectability is exactly $0$-delayed strong detectability.
We point out that the notion of $\omega$-$K$-delayed strong detectability can be obtained from the notion of
$(k_1,k_2)$-detectability 
originally studied in \cite{Shu2013DelayedDetectabilityDES} by replacing ``a given $k_1$'' by
``there exists a natural number $k_1$''.

The contributions of the paper are as follows. (1) We use a novel concurrent-composition method to
give polynomial-time algorithms for verifying $\omega$-$K$-delayed strong detectability 
and $*$-$K$-delayed strong detectability of FSAs, where the 
algorithms apply to all FSAs (Section~\ref{sec:verification}).
(2) 
By using the concurrent-composition method, we also obtain polynomial-time algorithms for verifying
$(k_1,k_2)$-detectability and $(k_1,k_2)$-D-detectability in the contexts of both $\omega$-languages
and languages for all FSAs, where these notions in the context of $\omega$-languages are
firstly studied in \cite{Shu2013DelayedDetectabilityDES},
where the verification algorithms given in \cite{Shu2013DelayedDetectabilityDES} 
apply to FSAs under Assumption~\ref{assum1_Det_PN}, as we discuss in 
Section~\ref{sec4:application}.
In Section~\ref{sec:synthesis}, we use the above method to give polynomial-time synthesis
algorithms for enforcing the above notions of delayed strong detectability.
Note that in the supervisory control framework, to the best of our knowledge,
all known existing algorithms for enforcing delayed strong detectability run in exponential time,
see, for example, $\omega$-strong detectability is considered in 
\cite{Shu2013EnforcingDetectability}, 
$*$-$(k_1,k_2)$-detectability is considered in 
\cite{Yin2019SynthesisDelayedStrongDetectabilityDES}, because the usage of a notion of observer
(a deterministic finite automaton that is of exponential size of the considered DES) is indispensable.
The synthesis algorithm for enforcing a stronger version of $*$-strong detectability
(obtained by letting the number $K$ in Definition \ref{def6*_Det_PN} equal to $0$,
pulling out ``$\exists k\in\N$'' in Definition \ref{def6*_Det_PN} and putting 
``with respect to a given $k\in\N$'' before ``if'')
under the liveness assumption
(a little weaker than \ref{item12_Det_PN}) of Assumption \ref{assum1_Det_PN}) given in 
\cite{Yin2016uniform} is also of exponential-time complexity.
The synthesis problem for enforcing $\omega$-strong detectability is also studied
in \cite{Shu2013SynthesisDetectabilityDES} by online sensor activation, 
where the overall computational complexity is also exponential of the size of the
considered DES, 
based on construction of an automaton square of the size of an observer.
In Section~\ref{sec2:pre}, we show necessary preliminaries, and in 
Section~\ref{sec4:conc}, we end up this paper with brief discussions on how to use
a concurrent-composition method to verify and synthesize diagnosability of FSAs in polynomial time
without any assumption, which also improve the related results in the literature.

		\begin{figure}[htbp]
		\tikzset{global scale/.style={
    scale=#1,
    every node/.append style={scale=#1}}}
		\begin{center}
			\begin{tikzpicture}[global scale = 1.0,
				>=stealth',shorten >=1pt,thick,auto,node distance=2.5 cm, scale = 1.0, transform shape,
	->,>=stealth,inner sep=2pt,
				every transition/.style={draw=red,fill=red,minimum width=1mm,minimum height=3.5mm},
				every place/.style={draw=blue,fill=blue!20,minimum size=7mm}]
				\tikzstyle{emptynode}=[inner sep=0,outer sep=0]
				\node[state, initial] (s0) {$s_0$};
				\node[state] (s1) [right of = s0] {$s_1$};
				\node[state] (s2) [right of = s1] {$s_2$};

				\path[->]
				(s0) edge [bend left] node {$t_1(a)$} (s1)
				(s1) edge [bend left] node {$t_2(b)$} (s0)
				(s1) edge node {$t_3(b)$} (s2)
				;
			\end{tikzpicture}
			\caption{An FSA, where circles denote states, $t_1,t_2,t_3$ denote events,
			$a,b$ denote the corresponding labels, a state with an input arrow from nowhere is 
			initial (e.g., $s_0$).}
			\label{fig24_Det_PN}
		\end{center}
	\end{figure}

\section{Preliminaries}\label{sec2:pre}

An FSA is a sextuple $$\Scal=(X,T,X_0,\to,\Sig,\ell),$$
where $X$ is a finite set of {\it states}, $T$ a finite set of {\it events},
$X_0\subset X$ a set of {\it initial states},
$\to\subset X\times T\times X$ a {\it transition relation}, $\Sig$ a finite set of {\it outputs (labels)},
and $\ell:T\to\Sig\cup\{\epsilon\}$
a {\it labeling function}, where $\epsilon$ denotes the empty word.
The event set $T$ can been rewritten as disjoint union
of {\it observable} event set $T_{o}$ and {\it unobservable} event set $T_{\ep}$,
where events of $T_o$ are with label in $\Sig$, but events of $T_{\ep}$ are labeled by $\ep$.
When an observable event occurs, its label can be observed, but when an unobservable event occurs, 
nothing can be observed.
For an observable event $t\in T$, we say $t$ {\it can be directly observed} if $\ell(t)$
differs from $\ell(t')$ for any other $t'\in T$. Labeling function $\ell:T\to\Sig\cup\{\ep\}$ 
can be recursively extended to $\ell:T^*\cup T^{\omega}\to\Sig^*\cup\Sig^{\omega}$ as
$\ell(t_1t_2\dots)=\ell(t_1)\ell(t_2)\dots$ and $\ell(\ep)=\ep$.
Transitions $x\xrightarrow[]{t}x'$ with $\ell(t)=\ep$ are called {\it $\ep$-transitions}
(or {\it unobservable transitions}), and other transitions are called
{\it observable transitions}. The event set $T$ can also been rewritten as disjoint union
of {\it controllable} event set $T_{c}$ and {\it uncontrollable} event set $T_{uc}$,
where controllable events are such that one can disable their occurrences, and uncontrollable
events are such that one cannot do that. Analogously, transitions $x\xrightarrow[]{t}x'$
with $t$ being controllable are called {\it controllable}, and other transitions are called
{\it uncontrollable}. With respect to a subset $X'\subset X$,
the {\it semiautomaton} is defined by $\Scal'=\{X',T,\to\cap (X'\times T\times X'),
\Sig,\ell\}$. Note that for a semiautomaton, initial states are not necessarily assigned.
When initial states are assigned, a semiautomaton becomes an FSA.


Next we introduce necessary notions that will be used throughout this paper.
Symbols $\N$ and $\Z_{+}$ denote the sets of natural numbers and positive integers, respectively.
For a set $S$, $S^*$ and $S^{\omega}$ are used to denote the sets of finite sequences
(called {\it words}) of elements of $S$ including the empty word $\epsilon$
and infinite sequences (called {\it configurations}) of elements of $S$,
respectively. As usual, we denote $S^{+}=S^*\setminus\{\epsilon\}$.
For a word $s\in S^*$,
$|s|$ stands for its length, and we set $|s'|=+\infty$ for all $s'\in S^{\omega}$.
For $s\in S$ and natural number $k$, $s^k$ and $s^{\omega}$ denote the $k$-length word and configuration
consisting of copies of $s$'s, respectively.
For a word (configuration) $s\in S^*(S^{\omega})$, a word $s'\in S^*$ is called a {\it prefix} of $s$,
denoted as $s'\sqsubset s$,
if there exists another word (configuration) $s''\in S^*(S^{\omega})$ such that $s=s's''$.
For two natural numbers $i\le j$, $[i,j]$ denotes the set of all integers between $i$ and $j$ including $i$ and $j$;
and for a set $S$, $|S|$ its cardinality and $2^S$ its power set.

A state $x\in X$ is called {\it deadlock} if 
$(x,t,x')\notin \to$ for any $t\in T$ and $x'\in X$.
$\Scal$ is called {\it deadlock-free} if it has no deadlock state.
For all $x,x'\in X$ and $t\in T$,
we also denote $x\xrightarrow[]{t}x'$ if $(x,t,x')\in\to$. More generally, we denote transitions 
$x\xrightarrow[]{t_1}x_1$, $x_1\xrightarrow[]{t_2}x_2$, $\dots$, $x_{n-1}\xrightarrow[]{t_n}x_n$
by $x\xrightarrow[]{t_1 \dots t_n}x_n$ for short, where $n\in\Z_{+}$, and call it a 
{\it transition sequence from $x$ to $x_n$ under $t_1\dots t_n$}.
We say {\it $\Scal$ generates an event sequence $s\in T^+$} if there is a transition
sequence $x_0\xrightarrow[]{s}x$ with $x_0\in X_0$ and $x\in X$.
The set of event sequences generated by $\Scal$ is denoted by $\Tt(\Scal)$.
We say {\it a state $x'\in X$ is reachable from a state}
$x\in X$ if there exist $t_1,\dots,t_n\in T$ such that
$x\xrightarrow[]{t_1\dots t_n}x'$, where $n$ is a positive integer. 
We say {\it a subset $X'$ of $X$ is reachable from a state} $x\in X$ if some state of $X'$ is 
reachable from $x$. Similarly {\it a state $x\in X$ is reachable from a subset $X'$ of $X$}
if $x$ is reachable from some state of $X'$.
We call a state $x\in X$
{\it reachable} if either $x\in X_0$ or it is reachable from an initial state.

For each $\s\in\Sig^*$, we denote by $\Mt(\Scal,\s)$ the set of
states that the system can be in after $\s$ has been observed, i.e., 
$\Mt({\cal S},\s):=
\{x\in X|(\exists x_0\in X_0)(\exists s\in T^+)[
(\ell(s)=\s)\wedge(x_0\xrightarrow[]{s}x)]\}$.
In addition, we set $\Mt({\cal S},\epsilon):=\Mt({\cal S},\epsilon)\cup X_0$.
When computation delays are considered, the set $\Mt(\Scal,\s)$ can be extended to $\Mt(\Scal,\s_1,\s_2):=
\{x\in X|(\exists x_0\in X_0)(\exists x'\in X)(\exists s_1,s_2\in T^+)[(\ell(s_1)=\s_1)\wedge
(\ell(s_2)=\s_2)\wedge(x_0\xrightarrow[]{s_1}x\xrightarrow[]{s_2}x')]\}$ for all $\s_1\in\Sig^*$ and
$\s_2\in\Sig^+$.
Particularly, $\Mt(\Scal,\s_1,\ep):=\Mt(\Scal,\s_1)$ for all $\s_1\in\Sig^*$,
$\Mt(\Scal,\ep,\s_2):=\Mt(\Scal,\ep,\s_2)\cup\{x_0\in X_0|(\exists x\in X)(\exists s\in T^+)
[(x_0\xrightarrow[]{s}x)\wedge(\ell(s)=\s_2)]\}$ for all $\s_2\in\Sig^+$.
$\LM({\cal S})$ denotes the {\it language generated} by system $\cal S$,
i.e., $\LM({\cal S}):=\{\s\in\Sig^*|\Mt({\cal S},\s)\ne\emptyset\}$.
An infinite event sequence $t_1t_2\dots$$\in T^{\omega}$ is called {\it generated by}
$\cal S$ if there exist states $x_0,x_1,\dots$$\in X$ with $x_0\in X_0$ such that
for all $i\in\N$, $(x_i,t_{i+1},x_{i+1})\in\to$.
We use $\LM^{\omega}({\cal S})$ to denote the $\omega$-{\it language generated} by $\cal S$,
i.e., $\LM^{\omega}(\Scal):=\{\s\in\Sigma^{\omega}|(\exists t_1t_2\dots$$\in T^{\omega}
\text{ generated by }\Scal)[\ell(t_1t_2\dots)$$=\s]\}$.


The following two assumptions are commonly used in detectability studies 
(cf. \cite{Shu2007Detectability_DES,Shu2011GDetectabilityDES,Shu2013DelayedDetectabilityDES}),
but are not needed in the current paper.

\begin{assumption}\label{assum1_Det_PN}
	An FSA $\Scal=(X,T,X_0,\to,\Sig,\ell)$ satisfies
	\begin{enumerate}[(i)]
		\item\label{item11_Det_PN}
			$\Scal$ is deadlock-free, 
		\item\label{item12_Det_PN} 	
			$\Scal$ is prompt, 
			i.e., for every reachable state $x\in X$ and every
			nonempty unobservable event sequence $s$, there exists no transition
			sequence $x\xrightarrow[]{s}x$ in $\Scal$.
	\end{enumerate}
\end{assumption}


%

\section{Polynomial-time verification algorithms}\label{sec:verification}

We next formulate $K$-delayed strong detectability for FSAs, where $K\in\N$.

\begin{definition}\label{def6_Det_PN}
	An FSA $\Scal=(X,T,X_0,\to,\Sig,\ell)$ is called {\it $\omega$-$K$-delayed strongly detectable} 
	if
	\begin{align*}
		&(\exists k\in\N)(\forall\s\in \LM^{\omega}({\cal S}))(\forall\s_1\s_2\sqsubset\s)\\
		&[((|\s_1|\ge k)\wedge(|\s_2|\ge K))\implies(|\Mt({\cal S},\s_1,\s_2)|=1)].
	\end{align*}
\end{definition}

\begin{definition}\label{def6*_Det_PN}
	An FSA $\Scal=(X,T,X_0,\to,\Sig,\ell)$ is called {\it $*$-$K$-delayed strongly detectable} 
	if
	\begin{align*}
		&(\exists k\in\N)(\forall\s\in \LM({\cal S}))(\forall\s_1\s_2\sqsubset\s)\\
		&[((|\s_1|\ge k)\wedge(|\s_2|\ge K))\implies(|\Mt({\cal S},\s_1,\s_2)|=1)].
	\end{align*}
\end{definition}

One directly sees that if $K=0$, then $K$-delayed strong detectability reduces to the conventional 
strong detectability.

We will use a concurrent-composition method to give polynomial-time algorithms for verifying
$K$-delayed strong detectability for both cases.


In order to show the main results, we need two notions of concurrent composition and 
observation automaton for an FSA.

Consider an FSA $\Scal=(X,T,X_0,\to,\Sig,\ell)$.
We construct its {\it concurrent composition}
\begin{equation}\label{eqn48_Det_PN}
	\CCa(\Scal)=(X',T',X_0',\to')
\end{equation} as follows:
\begin{enumerate}
	\item $X'=X\times X$;
	\item $T'=T_o'\cup T_{\epsilon}'$, where $T_o'=\{(\breve{t},\breve{t}')|\breve{t},\breve{t}'\in T,
		\ell(\breve{t})=\ell(\breve{t}')\in\Sig\}$,
		$T_{\epsilon}'=\{(\breve{t},\epsilon)|\breve{t}\in T,\ell(\breve{t})=\epsilon\}\cup
		\{(\epsilon,\breve{t})|\breve{t}\in T,\ell(\breve{t})=\epsilon\}$;
	\item $X_0'=X_0\times X_0$;
	\item for all $(\breve{x}_1,\breve{x}_1'),(\breve{x}_2,\breve{x}_2')\in X'$, $(\breve{t},\breve{t}')
		\in T_o'$, $(\breve{t}'',\epsilon)\in T_{\epsilon}'$,
		and $(\epsilon,\breve{t}''')\in T_{\epsilon}'$,
		\begin{itemize}
			\item $((\breve{x}_1,\breve{x}_1'),(\breve{t},\breve{t}'),(\breve{x}_2,\breve{x}_2'))\in\to'$ 
				if and only if $(\breve{x}_1,\breve{t},\breve{x}_2),(\breve{x}_1',\breve{t}',\breve{x}_2')\in\to$,
			\item $((\breve{x}_1,\breve{x}_1'),(\breve{t}'',\epsilon),(\breve{x}_2,\breve{x}_2'))\in\to'$ 
				if and only if $(\breve{x}_1,\breve{t}'',\breve{x}_2)\in\to$, $\breve{x}_1'=\breve{x}_2'$,
			\item $((\breve{x}_1,\breve{x}_1'),(\epsilon,\breve{t}'''),(\breve{x}_2,\breve{x}_2'))\in\to'$ 
				if and only if $\breve{x}_1=\breve{x}_2$, $(\breve{x}_1',\breve{t}''',\breve{x}_2')\in\to$.
		\end{itemize}
	\end{enumerate}

For an event sequence $s'\in (T')^{*}$, we use $s'(L)$ and $s'(R)$ to denote its left and right
components, respectively. Similar notation is applied to states of $X'$. 
In addition, for every $s'\in(T')^{*}$, we use $\ell(s')$
to denote $\ell(s'(L))$ or $\ell(s'(R))$, since $\ell(s'(L))=\ell(s'(R))$. 
In the above construction, $\CCa(\Scal)$ aggregates every pair of transition sequences of 
$\Scal$ producing the same label sequence. In addition, $\CCa(\Scal)$ has at most
$|X|^2$ states and at most $|X|^2(2|T_{\ep}||X|+\sum_{\sigma\in\Sig}|\ell^{-1}(\sigma)|^2
|X|^2)$ transitions, where the number does not exceed $|X|^2(2|T_{\ep}||X|+|T_o|^2|X|^2)$.
Hence it takes time $O(2|X|^3|T_\ep|+|X|^4\sum_{\sigma\in\Sig}|\ell^{-1}(\sigma)|^2)$ to construct
$\CCa(\Scal)$. For the special case when all observable events can be directly observed studied in 
\cite{Shu2011GDetectabilityDES}, the complexity reduces to $O(2|X|^3|T_\ep|+|X|^4|T_o|)$.

Construct its {\it observation automaton}
	\begin{equation}\label{eqn70_Det_PN}
		\Obs(\Scal)=(X,\{\varepsilon,\hat\epsilon\},X_0,\to',\{\hat\epsilon\},\ell')
	\end{equation}
	in linear time of the size of $\Scal$,
	where $\to'\subset X\times\{\varepsilon,\hat\epsilon\}\times X$, $\ell'(\varepsilon)=\epsilon$,
	$\ell'(\hat\epsilon)=\hat\epsilon$,
	for every two states $x,x'\in X$, $(x,\hat\epsilon,x')\in\to'$
	if there exists $t\in T$ such that $(x,t,x')\in\to$ and $\ell(t)\ne\epsilon$; $(x,\varepsilon,x')
	\in\to'$ if there exists $t\in T$ such that $(x,t,x')\in\to$ and for all $t'\in T$
	with $(x,t',x')\in\to$, $\ell(t')=\epsilon$. Here the label function $\ell'$ is also naturally
	extended to $\ell':\{\varepsilon,\hat\epsilon\}^*\cup\{\varepsilon,\hat\epsilon\}^{\omega}\to
	\{\hat\epsilon\}^*\cup\{\hat\epsilon\}^{\omega}$.

\begin{example}\label{exam3_Det_PN}
	An FSA $\Scal$,  its concurrent composition and observation 
	automaton are shown in Fig.~\ref{fig19_Det_PN}.
			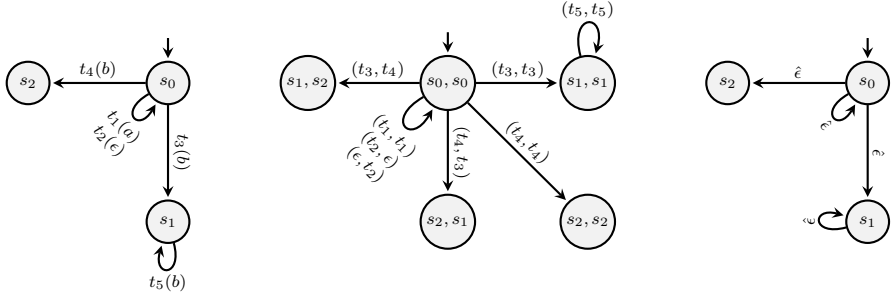
\begin{figure}
        \centering
\begin{tikzpicture}
	[>=stealth',shorten >=1pt,thick,auto,node distance=2.3 cm, scale = 0.8, transform shape,
	->,>=stealth,inner sep=2pt,
	every transition/.style={draw=red,fill=red,minimum width=1mm,minimum height=3.5mm},
	every place/.style={draw=blue,fill=blue!20,minimum size=7mm}]
	\node[initial, initial where =above, state] (s0) {$s_0$};
	\node[state] (s1) [below of =s0] {$s_1$};
	\node[state] (s2) [left of =s0] {$s_2$};
	
	\path [->]
	(s0) edge [out = 210, in = 240, loop] node [below, sloped] {$\begin{matrix}t_1(a)\\t_2(\epsilon)\end{matrix}$} (s0)
	(s0) edge node [above, sloped] {$t_3(b)$} (s1)
	(s0) edge node [above, sloped] {$t_4(b)$} (s2)
	(s1) edge [loop below] node [below, sloped] {$t_5(b)$} (s1)
	;

	\node[state] (s1s2) [right of =s0] {$s_1,s_2$};
	\node[state,initial,initial where=above] (s0s0) [right of =s1s2] {$s_0,s_0$};
	\node[state] (s2s1) [below of =s0s0] {$s_2,s_1$};
	\node[state] (s1s1) [right of =s0s0] {$s_1,s_1$};
	\node[state] (s2s2) [below of =s1s1] {$s_2,s_2$};

	\path [->]
	(s0s0) edge [out = 210, in = 240, loop] node [below, sloped] {$\begin{matrix}(t_1,t_1)\\(t_2,\epsilon)\\(\epsilon,t_2)\end{matrix}$} (s0s0)
	(s0s0) edge node [above, sloped] {$(t_3,t_4)$} (s1s2)
	(s0s0) edge node [above, sloped] {$(t_3,t_3)$} (s1s1)
	(s0s0) edge node [above, sloped] {$(t_4,t_3)$} (s2s1)
	(s0s0) edge node [above, sloped] {$(t_4,t_4)$} (s2s2)
	(s1s1) edge [loop above] node [above, sloped] {$(t_5,t_5)$} (s1s1)
	;

	\node[state] (s2') [right of =s1s1] {$s_2$};
	\node[initial, initial where =above, state, right of = s2'] (s0') {$s_0$};
	\node[state] (s1') [below of =s0'] {$s_1$};
	
	\path [->]
	(s0') edge [out = 210, in = 240, loop] node [below, sloped] {$\hat\ep$} (s0')
	(s0') edge node [above, sloped] {$\hat\ep$} (s1')
	(s0') edge node [above, sloped] {$\hat\ep$} (s2')
	(s1') edge [loop left] node [above, sloped] {$\hat\ep$} (s1')
	;

        \end{tikzpicture}
		\caption{An FSA (left), its concurrent composition (middle, only 
		reachable states illustrated), and its observation automaton (right).}
		\label{fig19_Det_PN}
	\end{figure}
\end{example}

\subsection{Verifying $\omega$-$K$-delayed strong detectability}
\label{subsec:omegaKDelaySD}

\begin{theorem}\label{thm9_Det_PN}
	The $\omega$-$K$-delayed strong detectability of FSAs can be verified in polynomial time.
\end{theorem}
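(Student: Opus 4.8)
The plan is to reduce $\omega$-$K$-delayed strong detectability to a reachability-and-cycle pattern inside the concurrent composition $\CCa(\Scal)$ of~\eqref{eqn48_Det_PN}, refined by the observation automaton $\Obs(\Scal)$ of~\eqref{eqn70_Det_PN} so that the criterion is valid for \emph{every} FSA, not only those satisfying Assumption~\ref{assum1_Det_PN}.

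First I would use $\Obs(\Scal)$ to compute the set $X_{\mathrm{live}}\subseteq X$ of states from which $\Scal$ can still produce an infinite observation; equivalently, $x\in X_{\mathrm{live}}$ iff in $\Obs(\Scal)$ the state $x$ can reach a cycle that contains a $\hat\epsilon$-transition, which is a standard strongly-connected-component plus reachability computation in time polynomial in the size of $\Scal$. I would then build $\CCa(\Scal)=(X',T',X_0',\to')$, call a state $(x,x')\in X'$ \emph{distinguishing} if $x\ne x'$, and \emph{final} if $x\in X_{\mathrm{live}}$ or $x'\in X_{\mathrm{live}}$.

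The core step is to prove the following equivalence: $\Scal$ is \emph{not} $\omega$-$K$-delayed strongly detectable iff there exist a state $z\in X'$ reachable from $X_0'$ lying on a cycle of $\CCa(\Scal)$ that carries at least one observable transition, a distinguishing state $(a,b)$ reachable from $z$, and a final state $(c,d)$ reachable from $(a,b)$ along a path carrying at least $K$ observable transitions. For the ``if'' direction one pumps the observable cycle to make the observed prefix $\s_1$ arbitrarily long, reads it through $z$ to $(a,b)$ (so that the two components exhibit two distinct states of $\Mt(\Scal,\s_1,\s_2)$, where $\s_2$ is the label of the segment from $(a,b)$ to $(c,d)$), and extends the live component of $(c,d)$ to an infinite observation; this contradicts Definition~\ref{def6_Det_PN} for every $k$. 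For the ``only if'' direction one starts from a failure, obtaining two runs with common label $\s_1$ then $\s_2$ and a run realizing the extendable prefix $\s_1\s_2$ that ends in a live state; re-splitting the latter at the $\s_1/\s_2$ boundary and pairing it with one of the first two runs yields, via the aggregation property of $\CCa(\Scal)$, a path of $\CCa(\Scal)$ from $X_0'$ through a distinguishing state to a final state whose two segments carry $|\s_1|$ and $|\s_2|\ge K$ observable transitions; since failures exist for every $k$, $|\s_1|$ is unbounded, and as there are only finitely many distinguishing states one of them, say $(a,b)$, is reached along paths with arbitrarily many observable transitions, which (two states reached immediately after an observable transition must coincide) forces the required observable cycle on the way to $(a,b)$.

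Finally, the pattern is checkable in polynomial time: the states reachable from $X_0'$ that lie on a cycle carrying an observable transition are read off from the strongly-connected-component decomposition of $\CCa(\Scal)$; reachability of distinguishing and final states is linear; and from a given $(a,b)$ the existence of a path to a final state carrying at least $K$ observable transitions holds iff either an observable cycle lies between $(a,b)$ and a final state, or the largest number of observable transitions on such a path avoiding observable cycles is at least $K$ — the latter being a longest-path computation on an acyclic structure, hence polynomial even when $K$ is encoded in binary (alternatively, cap $K$ by the upper bound on relevant $K$ established later in the paper and use a bounded counter). Combined with the complexity of building $\CCa(\Scal)$ and $\Obs(\Scal)$ noted above, this yields a polynomial-time verification algorithm. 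The hard part is the ``only if'' direction: one must argue that any failure can be simultaneously routed through a \emph{distinguishing} pair of states and through a \emph{live} component, which is exactly what the re-splitting argument delivers.
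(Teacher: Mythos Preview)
Your proposal is correct and follows essentially the same approach as the paper: both reduce non-detectability to the existence, in $\CCa(\Scal)$, of an initial-reachable observable cycle followed by a distinguishing state, followed by $K$ further observable steps to a state whose one component can reach an observable cycle in $\Scal$, and both verify this pattern by SCC/reachability computations. The paper's algorithm iterates $K$ backward-reachability steps (giving $O((K{+}3)\cdot\text{size of }\CCa(\Scal))$), whereas you additionally note the longest-path/DAG alternative and the cap on $K$; these are minor presentational differences, not a different route.
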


\begin{proof}
	Consider an FSA $\Scal=(X,T,X_0,\to,\Sig,\ell)$ and the accessible part\\
	$\Acc(\Obs(\CCa(\Scal)))=(X',T',X_0',\to')$ of the observation automaton of the
	concurrent composition of $\Scal$.
	We claim that $\Scal$ is not $\omega$-$K$-delayed strongly detectable if and only if in 
	$\Acc(\Obs(\CCa(\Scal)))$, 
	\begin{subequations}\label{eqn80_Det_PN}
		\begin{align}
			&\text{there exists a transition sequence }\nonumber\\
			&x_0'\xrightarrow[]{s_1'}x_1'\xrightarrow[]{s_2'}\cdots\xrightarrow[]{s_{3+K}'}x_{3+K}'
			\text{ satisfying}\label{eqn80_1_Det_PN}\\
			&x_0'\in X_0';x_1',\dots,x_{3+K}'\in X'; s_1',\dots,s_{3+K}'\in(T')^*;\label{eqn80_2_Det_PN}\\
			&x_1'=x_2';\ell(s_2')\in\Sig^+;x_3'(L)\ne x_3'(R);\label{eqn80_3_Det_PN}\\
			&|\ell(s_i')|=1,i\in[4,3+K];\\
			&\text{and in }\Scal,\text{ there exists a cycle with a nonempty label}\nonumber\\
			&\text{sequence reachable from }x_{3+K}'(L).\label{eqn80_4_Det_PN}
		\end{align}
	\end{subequations}

	If \eqref{eqn80_Det_PN} holds, then 
	in $\Scal$, for every $n\in\Z_{+}$, there exists a transition sequence 
	\begin{align*}
		&x_0'(L)\xrightarrow[]{s_1'(L)}x_1'(L)\xrightarrow[]{(s_2'(L))^n}x_1'(L)\xrightarrow[]{s_3'(L)}\\
		&x_3'(L)\xrightarrow[]{s_4'(L)\dots s_{3+K}'(L)}x_{3+K}'(L)
	\end{align*}
	such that $|\Mt(\Scal,\s_1,\s_2)|>1$ 
	and at $x_{3+K}'(L)$ there is an 
	infinite-length transition sequence with an infinite-length label sequence,
	where $\s_1=\ell(s_1'(L)(s_2'(L))^ns_3'(L))$ is of length $\ge n$, $\s_2=\ell(s_4'(L)\dots s_{3+K}'(L))$
	is of length $\ge K$. Hence $\Scal$ is not $\omega$-$K$-delayed strongly detectable. 

	If $\Scal$ is not $\omega$-$K$-delayed strongly detectable, then for every $n\in\Z_{+}$, there exists a transition
	sequence $x_0\xrightarrow[]{s_1}x_1\xrightarrow[]{s_2}x_2\xrightarrow[]{s_3}$ 
	such that $x_0\in X_0$, $x_1,x_2\in X$,
	$s_1,s_2\in T^*$, $s_3\in T^{\omega}$, $|\ell(s_1)|\ge n$, $|\ell(s_2)|\ge K$,
	$\ell(s_3)\in\Sig^{\omega}$, and
	$|\Mt(\Scal,\ell(s_1),\ell(s_2))|>1$. Then there is a transition sequence such that the sequence and
	$x_0\xrightarrow[]{s_1}x_1\xrightarrow[]{s_2'}x_2'$ combine to \eqref{eqn80_1_Det_PN}
	if $n$ is sufficiently large by the finiteness of $X$, where $x_1\xrightarrow[]{s_2'}x_2'$
	is a prefix of $x_1\xrightarrow[]{s_2}x_2$ such that $|\ell(s_2')|=K$. 
	Also by the finiteness of $X$, there exists a cycle with a nonempty label sequence
	reachable from $x_2$. Hence \eqref{eqn80_Det_PN} holds.

	Next we show that \eqref{eqn80_Det_PN} can be verified in linear time of the size of\\
	$\Acc(\Obs(\CCa(\Scal)))$. See Fig.~\ref{fig25_Det_PN} for a sketch.

	\begin{enumerate}
		\item Compute $\Acc(\Obs(\Scal))$, and then the set $X_{4+K}$ of states of
			$\Acc(\Obs(\Scal))$ that belong to a cycle with positive-length label sequences.
		\item Compute $\Acc(\Obs(\CCa(\Scal)))=(X',T',X_0',\to')$, and then $X_{3+K}'=\{
			(x,x')\in X'|(\exists x''\in X_{4+K})[x''\text{ is reachable
			from }x]\}$.
		\item Compute $X_{2+K}',X_{1+K}'$, \dots, $X_4'$ in order as 
			$X_{i}'=\{(x,x')\in X'|(\exists (x'',x''')\in X'_{i+1})
			(\exists s\in\hat\ep\{\varepsilon\}^*)
			[(x,x')\xrightarrow[]{s}(x'',x''')]\}$.
		\item Compute $X_{3}'=\{(x,x')\in X'|[x'\ne x'']\wedge((\exists (x'',x''')\in X'_{4})
			(\exists s\in\hat\ep\{\varepsilon\}^*)$ $
			[(x,x')\xrightarrow[]{s}(x'',x''')])\}$.
		\item Compute $X_2'=\{(x,x')\in X'|[X_3'\text{ is reachable from }(x,x')]\wedge(
			(\exists s\in\{\varepsilon,\hat\ep\}^+\setminus\{\varepsilon\}^+)
			[(x,x')\xrightarrow[]{s}(x,x')])\}$.
		\item $X_2'\ne\emptyset$ if and only if $\Scal$ is not $\omega$-$K$-delayed strongly detectable.
	\end{enumerate}

	Step 1 can be implemented in linear time of the size of $\Scal$. Compute all
	strongly connected components of $\Acc(\Obs(\Scal))$, which can be done by using 
	the well-known depth-first search in linear time of $\Scal$. By definition, if a strongly
	connected component contains a transition, then it contains a cycle consisting of all 
	vertices and all transitions (repeated of states and transitions permitted) 
	in the component. Then a state $x$ belongs to a cycle
	with a nonempty label sequence if and only if there is an observable transition in 
	the strongly connected component to which $x$ belongs. And hence
	$X_{4+K}$ consists of all states
	of all strongly connected components that contain at least one observable transition.
	Hence Step 1 can be finished in time at most twice of the size of $\Scal$.

	In Step 2, firstly compute $X_{4+K}'=\{x\in\Acc(\Obs(\Scal))|
	\text{either }x\in X_{4+K}\text{ or }X_{4+K}\\\text{ is reachable from }x\}$,
	and then $X_{3+K}'=\{(x,x')\in X'|x\in X_{4+K}'\}$.
	Hence Step 2 can be finished in time that does not
	exceed twice of the size of $\Acc(\Obs(\CCa(\Scal)))$.
	
	In Step 3, taking $X_{2+K}'$ for example,
	firstly compute $X_{3+K}''=\{(x,x')\in X'|X_3'\\\text{ is reachable from }
	(x,x')\text{ through $\epsilon$-transitions}\}\cup X_3'$,
	secondly compute $X_{3+K}'''=\{(x,x')\in X'|\text{ there is an
	observable transition from }$ $(x,x')\text{ to some state of }X_{3+K}''\}$.
	Then $X_{3+K}'''=X_{2+K}'$. Hence computing $X_{2+K}'$ takes time
	at most the size of $\Acc(\Obs(\CCa(\Scal)))$.
	Hence Step 3 can be finished in time at most $K-1$ times of the size of $\Acc(\Obs(\CCa(\Scal)))$.

	Similarly, Step 4 can be finished in time at most the size of $\Acc(\Obs(\CCa(\Scal)))$.

	Step 5 can be implemented in time at most the size of $\Acc(\Obs(\CCa(\Scal)))$
	by the argument in Step 1.

	Based on the above argument,
	the overall computational cost of verifying $\omega$-$K$-delayed strong detectability 
	does not exceed twice of the size of $\Scal$ plus
	$K+3$ times of the size of $\Acc(\Obs(\CCa(\Scal)))$. Hence it takes time
	$O((K+3)(2|X|^3|T_\ep|+|X|^4\sum_{\sigma\in\Sig}|\ell^{-1}(\sigma)|^2))$
	to verify $\omega$-$K$-delayed strong detectability. For the special case when all observable
	events can be directly observed studied in \cite{Shu2011GDetectabilityDES},
	the complexity reduces to $O((K+3)(2|X|^3|T_\ep|+|X|^4|T_o|))$.

		\begin{figure*}
        \centering
\begin{tikzpicture}[>=stealth',shorten >=1pt,auto,node distance=1.8 cm, scale = 1.0, transform shape,
	->,>=stealth,inner sep=2pt,state/.style={shape=circle,draw,top color=red!10,bottom color=blue!30}]

	\node[elliptic state] (x0) {$\begin{matrix}x_0\\\bar x_0\end{matrix}$};
	\node[elliptic state] (x1) [right of = x0] {$\begin{matrix}x_1\\\bar x_1\end{matrix}$};
	\node[elliptic state] (x1') [right of = x1] {$\begin{matrix}x_1\\\bar x_1\end{matrix}$};
	\node[elliptic state] (x3) [right of = x1'] {$\begin{matrix}x_3\\\nparallel\\\bar x_3\end{matrix}$};
	\node[elliptic state] (x4) [right of = x3] {$\begin{matrix}x_4\\\bar x_4\end{matrix}$};
	\node(empty) (x5) [right of = x4] {$\cdots$};
	\node[elliptic state] (xk+3) [right of = x5] {$\begin{matrix}x_{3+K}\\\bar x_{3+K}\end{matrix}$};
	\node[elliptic state] (xk+4) [above of = xk+3] {$x_{4+K}$};
	\node[elliptic state] (xk+4') [left of = xk+4] {$x_{4+K}$};

	\node(empty) (x1_)  [below of = x0] {$X_{1}'$};
	\node(empty) (x2_) [right of = x1_] {$X_{2}'$};
	\node(empty) (x2'_) [right of = x2_] {$X_{2}'$};
	\node(empty) (x3_) [right of = x2'_] {$X_{3}'$};
	\node(empty) (x4_) [right of = x3_] {$X_{4}'$};
	\node(empty) (x5_) [right of = x4_] {};
	\node(empty) (xk+3_) [right of = x5_] {$X_{3+K}'$};
	\node(empty) (xk+4_) [above of = xk+4] {$X_{4+K}$};
	\node(empty) (xk+4'_) [above of = xk+4'] {$X_{4+K}$};
	
	\path [->]
	(x0) edge (x1)
	(x1) edge node {$+$} (x1')
	(x1') edge (x3)
	(x3) edge node {$+$} (x4)
	(x4) edge node {$+$} (x5)
	(x5) edge node {$+$} (xk+3)
	(xk+3) edge (xk+4)
	(xk+4) edge node {$+$} (xk+4')
	;

        \end{tikzpicture}
		\caption{Sketch for verifying \eqref{eqn80_Det_PN}, i.e., negation of $\omega$-$K$-delayed strong
		detectability.}
		\label{fig25_Det_PN}
	\end{figure*}
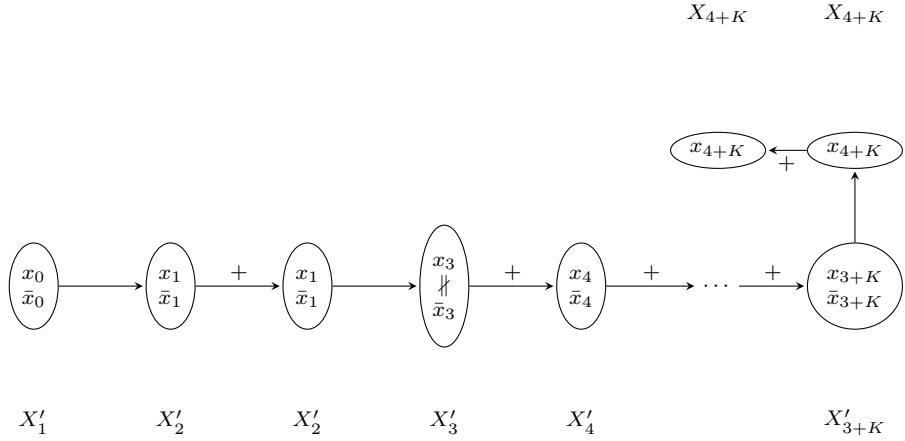

\end{proof}

\begin{example}\label{exam7_Det_PN}
	Recall the FSA $\Scal$ in the left part of Fig.~\ref{fig19_Det_PN}.
	We first verify its $\omega$-strong detectability.
	Following the procedure in the proof of Theorem~\ref{thm9_Det_PN}, we have
	$X_4=\{s_0,s_1\}$, $X_3'=\{(s_1,s_2),(s_2,s_1)\}$, $X_2'=\{(s_0,s_0)\}\ne\emptyset$,
	then $\Scal$ is not $\omega$-strongly detectable.
	
	Next we verify its $\omega$-$1$-delayed strong detectability. Similarly,
	we have $X_5=\{s_0,s_1\}$, $X_4'=\{(s_0,s_0),(s_1,s_1),(s_1,s_2),(s_2,s_1)\}$, $X_3'=\emptyset$,
	$X_2'=\emptyset$, then $\Scal$ is $\omega$-$1$-delayed strongly detectable.
\end{example}

\begin{remark}
	By Example~\ref{exam7_Det_PN}, one sees that $\omega$-strong detectability is not equivalent to 
	$\omega$-$1$-delayed
	strong detectability. Hence $\omega$-strong detectability is strictly stronger than $\omega$-$K$-delayed strong 
	detectability for some positive integer $K$. 
\end{remark}

Next by using the verification method in the proof of Theorem~\ref{thm9_Det_PN}, an upper bound
for $K$ in the notion of $\omega$-$K$-delayed strong detectability can be easily obtained.

\begin{corollary}\label{cor4_Det_PN}
	If an FSA is $\omega$-$K$-delayed strongly detectable for some $K\in\N$,
	then it is also $\omega$-$|X|^2$-delayed strongly detectable,
	where $X$ is the state set of the FSA.
\end{corollary}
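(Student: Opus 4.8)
The plan is to derive this bound from the graph‑theoretic characterization of the \emph{failure} of $\omega$-$K$-delayed strong detectability proved inside the proof of Theorem~\ref{thm9_Det_PN}, together with a pigeonhole/pumping argument on the automaton $\Acc(\Obs(\CCa(\Scal)))$, which has at most $|X|^{2}$ states. First I would record the trivial monotonicity fact: an FSA that is $\omega$-$K$-delayed strongly detectable is also $\omega$-$(K+1)$-delayed strongly detectable, since the same constant $k$ works in Definition~\ref{def6_Det_PN} (demanding $|\s_2|\ge K+1$ is a stronger hypothesis than demanding $|\s_2|\ge K$). Contrapositively, if $\Scal$ is not $\omega$-$(K+1)$-delayed strongly detectable then it is not $\omega$-$K$-delayed strongly detectable either. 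Hence it is enough to prove the following sharpening of the contrapositive of the corollary: if $\Scal$ is not $\omega$-$|X|^{2}$-delayed strongly detectable, then it is not $\omega$-$K$-delayed strongly detectable for arbitrarily large $K$ (whence, by monotonicity, for every $K\in\N$).

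So I would assume $\Scal$ is not $\omega$-$|X|^{2}$-delayed strongly detectable and invoke the equivalence established in Theorem~\ref{thm9_Det_PN}: there is a walk in $\Acc(\Obs(\CCa(\Scal)))$ of the form \eqref{eqn80_Det_PN} with parameter $K=|X|^{2}$. Along its ``tail'' this walk visits the $|X|^{2}+1$ states $x_3',x_4',\dots,x_{3+|X|^{2}}'$, with $x_{i}'$ and $x_{i+1}'$ joined by a segment $s_{i+1}'$ that carries exactly one observation. Since $\Acc(\Obs(\CCa(\Scal)))$ has at most $|X|^{2}$ states, the pigeonhole principle yields indices $3\le a<b\le 3+|X|^{2}$ with $x_a'=x_b'$; the portion of the tail running from $x_a'$ to $x_b'$ is then a closed walk $C$ assembled from $c:=b-a\ge 1$ single‑observation segments.

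The key construction would be to splice extra copies of $C$ into the walk: for each $t\ge 1$, follow the original walk up to $x_a'$, traverse $C$ a total of $t+1$ times, then continue along the original tail from $x_b'=x_a'$ to $x_{3+|X|^{2}}'$. This produces a walk of exactly the same shape \eqref{eqn80_Det_PN} whose tail now consists of $|X|^{2}+tc$ single‑observation segments, i.e.\ a witness that $\Scal$ is not $\omega$-$(|X|^{2}+tc)$-delayed strongly detectable. I would then check that every side condition in \eqref{eqn80_Det_PN} survives the surgery: the conditions on $x_0'$, on the loop $x_1'=x_2'$ with label in $\Sig^{+}$, and on $x_3'$ involve only the part of the walk preceding the tail and are untouched; each inserted segment still carries a single observation; and the terminal state $x_{3+|X|^{2}}'$ is unchanged, so requirement \eqref{eqn80_4_Det_PN} — that $\Scal$ contain a cycle with a nonempty label sequence reachable from the left component of the terminal state — continues to hold verbatim.

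Finally, since $|X|^{2}+tc\to\infty$ as $t\to\infty$, $\Scal$ fails to be $\omega$-$K$-delayed strongly detectable for arbitrarily large $K$, hence by the monotonicity observation for every $K\in\N$; this is the contrapositive of the corollary. The one step that needs genuine care is the bookkeeping of the third paragraph — confirming that rerouting the witnessing walk within its tail preserves \eqref{eqn80_4_Det_PN} — and it goes through because along any edge of $\Obs(\CCa(\Scal))$ the left component of a state is always $\Scal$-reachable from the left component of its predecessor (each underlying transition of $\CCa(\Scal)$ either advances the left component by one $\Scal$-transition or leaves it fixed), so the reachability demanded by \eqref{eqn80_4_Det_PN} can never be destroyed by the splicing.
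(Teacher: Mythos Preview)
Your proposal is correct and follows essentially the same route as the paper: argue by contrapositive, invoke the witness from Theorem~\ref{thm9_Det_PN} for $K=|X|^{2}$, apply pigeonhole to the $|X|^{2}+1$ tail states $x_3',\dots,x_{3+|X|^{2}}'$ (which live in a set of size at most $|X|^{2}$), and pump the resulting cycle to obtain witnesses for every $K$. Your write-up is in fact more explicit than the paper's (which conflates the sets $X_i'$ with the states $x_i'$ in its phrasing), and your verification that the splicing preserves condition~\eqref{eqn80_4_Det_PN} is a detail the paper leaves implicit.
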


\begin{proof}
	Assume that the FSA is not $\omega$-$|X|^2$-delayed strongly detectable, then by the proof of 
	Theorem~\ref{thm9_Det_PN}, we have the corresponding sets $X_{|X|^2+4}$, $X_{|X|^2+3}'$, \dots, $X_1'$ 
	are all nonempty (see Fig.~\ref{fig25_Det_PN} for a sketch). On the other hand, since there 
	exist at most $|X|^2$ distinct subsets of $X$ with cardinality $2$, one has at least two of $X_3',\dots,X_{|X|^2+3}'$
	are the same. Hence for each $K\ge|X|^2$, the FSA is not $\omega$-$K$-delayed strongly detectable.
\end{proof}

\begin{remark}
	Actually, this upper bound has been given in \cite{Shu2013DelayedDetectabilityDES} under 
	Assumption~\ref{assum1_Det_PN}. Hence the algorithm
	for verifying $\omega$-$(k_1,k_2)$-detectability can be used to verify
	$\omega$-$k_2$-delayed strong detectability
	by verifying $\omega$-$(|X|^2,k_2)$-detectability under Assumption~\ref{assum1_Det_PN},
	since by the upper bound one sees that $\omega$-$k_2$-delayed strong
	detectability is equivalent to $\omega$-$(|X|^2,k_2)$-detectability.
\end{remark}

\begin{remark}
	By Corollary~\ref{cor4_Det_PN}, the computational cost of verifying $\omega$-$K$-delayed 
	strong detectability can be reduced to twice of the size of $\Scal$ plus\\
	$(\min\{K,|X|^2\}+3)$ times of the size of $\Acc(\Obs(\CCa(\Scal)))$.
\end{remark}

\subsection{Verifying $*$-$K$-delayed strong detectability}

\begin{theorem}\label{thm9'_Det_PN}
	The $*$-$K$-delayed strong detectability of FSAs can be verified in polynomial time.
\end{theorem}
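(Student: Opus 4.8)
The plan is to follow the proof of Theorem~\ref{thm9_Det_PN} almost verbatim, again working inside the accessible part $\Acc(\Obs(\CCa(\Scal)))$ of the observation automaton of the concurrent composition of $\Scal=(X,T,X_0,\to,\Sig,\ell)$, but now dropping the requirement that the witnessing run can be extended to an infinite observation. Concretely, I would prove that $\Scal$ is \emph{not} $*$-$K$-delayed strongly detectable if and only if in $\Acc(\Obs(\CCa(\Scal)))=(X',T',X_0',\to')$ there is a transition sequence
\[
x_0'\xrightarrow{s_1'}x_1'\xrightarrow{s_2'}x_2'\xrightarrow{s_3'}x_3'\xrightarrow{s_4'}\cdots\xrightarrow{s_{3+K}'}x_{3+K}'
\]
satisfying $x_0'\in X_0'$, $x_1',\dots,x_{3+K}'\in X'$, $s_1',\dots,s_{3+K}'\in(T')^*$, $x_1'=x_2'$, $\ell(s_2')\in\Sig^+$, $x_3'(L)\ne x_3'(R)$, and $|\ell(s_i')|=1$ for $i\in[4,3+K]$ --- that is, exactly the conditions in \eqref{eqn80_Det_PN} with the reachable-label-cycle clause \eqref{eqn80_4_Det_PN} removed.

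For the ``if'' direction I would pump the cycle $x_1'\xrightarrow{s_2'}x_1'$ $n$ times and project onto the left and right components: this gives, for every $n\in\Z_{+}$, two transition sequences of $\Scal$ that jointly produce a label sequence $\s_1$ of length $\ge n$ and then a common label sequence $\s_2$ of length exactly $K$, and that, after reading $\s_1$, sit in the two distinct states $x_3'(L)\ne x_3'(R)$; since each of these is reached by reading $\s_1$ and admits a continuation producing $\s_2$, both belong to $\Mt(\Scal,\s_1,\s_2)$, so $|\Mt(\Scal,\s_1,\s_2)|>1$ with $\s_1\s_2\in\LM(\Scal)$, violating Definition~\ref{def6*_Det_PN}. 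For the ``only if'' direction, the negation of the property yields, for every $n$, labels $\s_1,\s_2$ with $|\s_1|\ge n$, $|\s_2|\ge K$ and two distinct states $y\ne z$ in $\Mt(\Scal,\s_1,\s_2)$; synchronizing on observable events the two transition sequences of $\Scal$ that reach $y$ and $z$ while producing $\s_1$, followed by the two continuations that produce the first $K$ labels of $\s_2$, yields a run of $\CCa(\Scal)$, hence of $\Obs(\CCa(\Scal))$, from $X_0'$ to $(y,z)$ with at least $n$ observable ($\hat\epsilon$-)steps followed by exactly $K$ observable steps. Choosing $n>|X|^2$ and applying the pigeonhole principle to the $\ge n$ states visited right after an observable step along the first segment gives a state repeated with at least one observable step in between, which supplies $x_1'=x_2'$ and $\ell(s_2')\in\Sig^+$, while the rest of the run gives $x_3'=(y,z)$, hence $x_3'(L)\ne x_3'(R)$, and the $K$ unit-label segments $s_4',\dots,s_{3+K}'$. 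This is a strict simplification of the $\omega$-case: no reachable nonempty-label cycle must be exhibited, so in the verification procedure the computation of $X_{4+K}$ and $X_{3+K}'$ is replaced by simply setting $X_{3+K}'=X'$.

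The verification algorithm is then the one in the proof of Theorem~\ref{thm9_Det_PN} with that change: construct $\Acc(\Obs(\CCa(\Scal)))$, put $X_{3+K}'=X'$, compute $X_{2+K}',X_{1+K}',\dots,X_4',X_3',X_2'$ by the same backward-reachability and strongly-connected-component steps, and conclude that $\Scal$ is not $*$-$K$-delayed strongly detectable iff $X_2'\ne\emptyset$. Each round is linear in the size of $\Acc(\Obs(\CCa(\Scal)))$, so the whole test runs in time $O((K+3)(2|X|^3|T_\ep|+|X|^4\sum_{\sigma\in\Sig}|\ell^{-1}(\sigma)|^2))$ --- and $O((K+3)(2|X|^3|T_\ep|+|X|^4|T_o|))$ when all observable events can be directly observed --- which is polynomial. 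I expect the only delicate point to be the pigeonhole bookkeeping in the ``only if'' direction, namely choosing the repeated state so that a genuine $\Sig^+$-labelled cycle rather than a pure $\varepsilon$-cycle is obtained; this is handled exactly as in Theorem~\ref{thm9_Det_PN}, and the same argument also shows, as in Corollary~\ref{cor4_Det_PN}, that $|X|^2$ is an upper bound for the useful range of $K$ here too.
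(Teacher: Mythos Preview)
Your proposal is correct and matches the paper's own proof essentially line by line: the paper likewise drops clause~\eqref{eqn80_4_Det_PN} from \eqref{eqn80_Det_PN} to obtain the equivalent negation condition, sets $X'_{3+K}=X'$ in place of computing $X_{4+K}$ and $X'_{3+K}$, and runs the same backward-reachability and SCC steps. The only cosmetic difference is that the paper records the slightly tighter constant $(K+2)$ instead of your $(K+3)$ in the running-time bound, since the initial step on $\Acc(\Obs(\Scal))$ is no longer needed.
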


\begin{proof}
	Consider an FSA $\Scal=(X,T,X_0,\to,\Sig,\ell)$ and
	$\Acc(\Obs(\CCa(\Scal)))=(X',T',X_0',\to')$.
	Similarly to $\omega$-$K$-delayed strong detectability
	(Theorem~\ref{thm9_Det_PN}),
	we can prove $\Scal$ is not $*$-$K$-delayed strongly detectable if and only if in
	$\Acc(\Obs(\CCa(\Scal)))$, 
	\begin{subequations}\label{eqn80'_Det_PN}
		\begin{align}
			&\text{there exists a transition sequence }\nonumber\\
			&x_0'\xrightarrow[]{s_1'}x_1'\xrightarrow[]{s_2'}\cdots\xrightarrow[]{s_{3+K}'}x_{3+K}'
			\text{ satisfying}\label{eqn80'_1_Det_PN}\\
			&x_0'\in X_0';x_1',\dots,x_{3+K}'\in X'; s_1',\dots,s_{3+K}'\in(T')^*;\label{eqn80'_2_Det_PN}\\
			&x_1'=x_2';\ell(s_2')\in\Sig^+;x_3'(L)\ne x_3'(R);\label{eqn80'_3_Det_PN}\\
			&|\ell(s_i')|=1,i\in[4,3+K].
		\end{align}
	\end{subequations}

	Next we show that \eqref{eqn80'_Det_PN} can be verified in linear time of the size of\\
	$\Acc(\Obs(\CCa(\Scal)))$. See Fig.~\ref{fig25'_Det_PN} for a sketch.
	We verify \eqref{eqn80'_Det_PN} by computing as in Fig.~\ref{fig25'_Det_PN}
	\[
	\begin{array}{llll}
		& X'_{3+K} &=& X',\\
		& X'_{2+K} &=& \{(x,x')\in X'|(\exists (x'',x''')\in X'_{3+K})\\
			&&& (\exists s\in\hat\ep\{\varepsilon\}^*)
			[(x,x')\xrightarrow[]{s}(x'',x''')]\},\\
		&\vdots && \\
		& X'_{4} &=& \{(x,x')\in X'|(\exists (x'',x''')\in X'_{5})\\
			&&& (\exists s\in\hat\ep\{\varepsilon\}^*)
			[(x,x')\xrightarrow[]{s}(x'',x''')]\},\\
		&X_{3}' &=& \{(x,x')\in X'|[x'\ne x'']\wedge((\exists (x'',x''')\in X'_{4})\\
			&&& (\exists s\in\hat\ep\{\varepsilon\}^*)
			[(x,x')\xrightarrow[]{s}(x'',x''')])\},\\
		&X_2' &=& \{(x,x')\in X'|[X_3'\text{ is reachable from }(x,x')]\wedge\\
			&&& ((\exists s\in\{\varepsilon,\hat\ep\}^+\setminus\{\varepsilon\}^+)
			[(x,x')\xrightarrow[]{s}(x,x')])\},
	\end{array}
	\]
	$X_2'\ne\emptyset$ if and only if $\Scal$ is not $*$-$K$-delayed strongly detectable.

	By similar analysis to Theorem~\ref{thm9_Det_PN}, one has the overall computational cost of verifying
	$*$-$K$-delayed strong detectability does not exceed 
	$K+2$ times of the size of $\Acc(\Obs(\CCa(\Scal)))$. Hence it takes time
	$O((K+2)(2|X|^3|T_\ep|+|X|^4\\\sum_{\sigma\in\Sig}|\ell^{-1}(\sigma)|^2))$
	to verify $*$-$K$-delayed strong detectability. 

		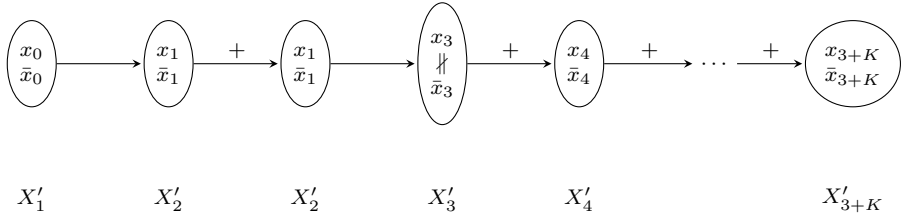
\begin{figure*}
        \centering
\begin{tikzpicture}[>=stealth',shorten >=1pt,auto,node distance=1.8 cm, scale = 1.0, transform shape,
	->,>=stealth,inner sep=2pt,state/.style={shape=circle,draw,top color=red!10,bottom color=blue!30}]

	\node[elliptic state] (x0) {$\begin{matrix}x_0\\\bar x_0\end{matrix}$};
	\node[elliptic state] (x1) [right of = x0] {$\begin{matrix}x_1\\\bar x_1\end{matrix}$};
	\node[elliptic state] (x1') [right of = x1] {$\begin{matrix}x_1\\\bar x_1\end{matrix}$};
	\node[elliptic state] (x3) [right of = x1'] {$\begin{matrix}x_3\\\nparallel\\\bar x_3\end{matrix}$};
	\node[elliptic state] (x4) [right of = x3] {$\begin{matrix}x_4\\\bar x_4\end{matrix}$};
	\node(empty) (x5) [right of = x4] {$\cdots$};
	\node[elliptic state] (xk+3) [right of = x5] {$\begin{matrix}x_{3+K}\\\bar x_{3+K}\end{matrix}$};

	\node(empty) (x1_)  [below of = x0] {$X_{1}'$};
	\node(empty) (x2_) [right of = x1_] {$X_{2}'$};
	\node(empty) (x2'_) [right of = x2_] {$X_{2}'$};
	\node(empty) (x3_) [right of = x2'_] {$X_{3}'$};
	\node(empty) (x4_) [right of = x3_] {$X_{4}'$};
	\node(empty) (x5_) [right of = x4_] {};
	\node(empty) (xk+3_) [right of = x5_] {$X_{3+K}'$};
	
	\path [->]
	(x0) edge (x1)
	(x1) edge node {$+$} (x1')
	(x1') edge (x3)
	(x3) edge node {$+$} (x4)
	(x4) edge node {$+$} (x5)
	(x5) edge node {$+$} (xk+3)
	;

        \end{tikzpicture}
		\caption{Sketch for verifying \eqref{eqn80'_Det_PN}, i.e., negation of $*$-$K$-delayed strong
		detectability.}
		\label{fig25'_Det_PN}
	\end{figure*}

\end{proof}

\begin{example}\label{exam7'_Det_PN}
	Recall the FSA $\Scal$ in the left part of Fig.~\ref{fig19_Det_PN}.
	We first verify its $*$-strong detectability.
	Following the procedure in the proof of Theorem~\ref{thm9'_Det_PN}, we have
	$X_3'=\{(s_1,s_2),(s_2,s_1)\}$, $X_2'=\{(s_0,s_0)\}\ne\emptyset$,
	then $\Scal$ is not $*$-strongly detectable.
	
	Next we verify its $*$-$1$-delayed strong detectability. Similarly,
	we have $X_4'=\{(s_0,s_0),(s_1,s_1),(s_1,s_2),(s_2,s_1),(s_2,s_2)\}$, $X_3'=\emptyset$,
	$X_2'=\emptyset$, then $\Scal$ is $*$-$1$-delayed strongly detectable.
\end{example}

Further analysis to $*$-$K$-delayed strong detectability can also be done as for 
$\omega$-$K$-delayed strong detectability. Here we only state the corresponding results.

\begin{corollary}\label{cor4'_Det_PN}
	If an FSA is $*$-$K$-delayed strongly detectable for some $K\in\N$,
	then it is also $*$-$|X|^2$-delayed strongly detectable,
	where $X$ is the state set of the FSA.
\end{corollary}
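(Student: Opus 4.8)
The plan is to mirror the argument used for Corollary~\ref{cor4_Det_PN}, but invoking the characterization of the negation of $*$-$K$-delayed strong detectability given by \eqref{eqn80'_Det_PN} in the proof of Theorem~\ref{thm9'_Det_PN} in place of \eqref{eqn80_Det_PN}. Suppose the FSA $\Scal$ is not $*$-$|X|^2$-delayed strongly detectable. Then by (the proof of) Theorem~\ref{thm9'_Det_PN}, the chain of sets $X'_{|X|^2+3}, X'_{|X|^2+2}, \dots, X'_4, X'_3, X'_2$ computed in $\Acc(\Obs(\CCa(\Scal)))$ are all nonempty; equivalently, there is a transition sequence $x_0'\xrightarrow{s_1'}x_1'\xrightarrow{s_2'}x_2'\xrightarrow{s_3'}x_3'\xrightarrow{s_4'}\cdots\xrightarrow{s_{3+|X|^2}'}x_{3+|X|^2}'$ in $\Acc(\Obs(\CCa(\Scal)))$ satisfying conditions \eqref{eqn80'_2_Det_PN}, \eqref{eqn80'_3_Det_PN}, namely $x_1'=x_2'$, $\ell(s_2')\in\Sig^+$, $x_3'(L)\ne x_3'(R)$, and $|\ell(s_i')|=1$ for $i\in[4,3+|X|^2]$.

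Next I would use the pigeonhole principle on the states $x_3', x_4', \dots, x_{3+|X|^2}'$ of $\Acc(\Obs(\CCa(\Scal)))$. Each of these states is a pair in $X\times X$; moreover each carries a distinguishing pair (they are reachable from $x_3'$ which has distinct left and right components, and by the way the sets $X'_i$ are defined the reachability is preserved), so it is natural to think of them as $2$-element subsets of $X$ — there are at most $|X|^2$ of these. Since we have $|X|^2+1$ such states $x_3',\dots,x_{3+|X|^2}'$, two of them, say $x_i'$ and $x_j'$ with $3\le i<j\le 3+|X|^2$, coincide. Then the segment of the transition sequence from $x_i'$ to $x_j'$ — which has label length $j-i\ge 1$ since $|\ell(s_\ell')|=1$ for $\ell\ge 4$ — forms a cycle that we can pump. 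Inserting extra copies of this cycle between $x_i'$ and $x_j'$, or deleting it, produces, for every $K\in\N$, a transition sequence of the form required by \eqref{eqn80'_Det_PN} with the parameter $|X|^2$ replaced by $K$: the prefix up through $x_3'$ (hence conditions $x_1'=x_2'$, $\ell(s_2')\in\Sig^+$, $x_3'(L)\ne x_3'(R)$) is untouched, and the tail after $x_3'$ can be lengthened or shortened by multiples of $j-i$ while keeping $|\ell(s_\ell')|=1$ on each remaining one-step segment. Hence $\Scal$ is not $*$-$K$-delayed strongly detectable for any $K$; contrapositively, if $\Scal$ is $*$-$K$-delayed strongly detectable for some $K\in\N$, it is $*$-$|X|^2$-delayed strongly detectable.

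The main obstacle, such as it is, is bookkeeping rather than conceptual: one must check that when the cycle $x_i'\to\cdots\to x_j'$ is pumped, all the intervening states still satisfy the membership conditions that made them land in the respective $X'_\ell$ sets — in particular that each one-step observable segment still has label length exactly $1$ and that reachability of the ``distinguishing'' requirement from $x_3'$ onward is unaffected. This follows because pumping only repeats a block of existing transitions, and the conditions in \eqref{eqn80'_Det_PN} past $x_3'$ are purely local (each $|\ell(s_\ell')|=1$) together with a single global reachability-of-$x_3'$-to-the-end requirement that is obviously preserved under inserting or deleting a cycle strictly after $x_3'$. I would also remark that the bound $|X|^2$ is stated loosely; as in the $\omega$-case, one could note it can be replaced by the number of reachable pair-states actually appearing, but for the statement as written the crude count suffices.
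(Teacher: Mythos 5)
Your proof is essentially the argument the paper intends: the paper omits the proof of Corollary~\ref{cor4'_Det_PN} entirely, deferring to the $\omega$-analogue (Corollary~\ref{cor4_Det_PN}), and there too the idea is a pigeonhole on the $|X|^2+1$ pair-states $x_3',\dots,x_{3+|X|^2}'$ of a witness for \eqref{eqn80'_Det_PN} followed by pumping the resulting cycle (indeed your version, phrased on individual states of a witness path rather than on the sets $X_i'$, is the cleaner way to say it). Two small repairs are needed. First, the states $x_4',\dots,x_{3+|X|^2}'$ need \emph{not} have distinct left and right components --- only $x_3'$ is required to; see $X_4'$ in Example~\ref{exam7'_Det_PN}, which contains the diagonal pairs $(s_0,s_0)$, $(s_1,s_1)$, $(s_2,s_2)$ --- so they are not ``$2$-element subsets of $X$''. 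The pigeonhole should simply be applied to ordered pairs, of which there are exactly $|X|^2$, and then the count goes through unchanged. Second, inserting or deleting copies of the cycle changes the tail length only by multiples of $j-i$, so pumping alone does not produce a tail of every length $K$; you need to add that once the pumped tail has at least $K$ one-label segments, truncating it after exactly $K$ of them yields a sequence of the form \eqref{eqn80'_Det_PN} for that $K$ (every condition is either on the prefix through $x_3'$, which is untouched, or is local to each retained one-label segment, and unlike the $\omega$-case there is no terminal reachability condition to worry about). With these two touch-ups the argument is complete and correct.
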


\begin{remark}
	By Corollary~\ref{cor4'_Det_PN}, the computational cost of verifying $*$-$K$-delayed 
	strong detectability can be reduced to 
	$(2\min\{K,|X|^2\}+2)$ times of the size of\\ $\Acc(\Obs(\CCa(\Scal)))$.
\end{remark}

\section{Application to verification of $(k_1,k_2)$-detectability and $(k_1,k_2)$-D-detectability}
\label{sec4:application}

In \cite{Shu2013DelayedDetectabilityDES}, two notions of $(k_1,k_2)$-detectability 
and $(k_1,k_2)$-D-detectability for FSAs in the context of $\omega$-languages for some 
$k_1,k_2\in\N$ are characterized,
and polynomial-time verification algorithms for these notions under 
Assumption~\ref{assum1_Det_PN} are
designed.
As an application of our results given in 
Section~\ref{sec:verification}, we give polynomial-time verification
algorithms for $(k_1,k_2)$-detectability and $(k_1,k_2)$-D-detectability in the contexts of 
$\omega$-languages and languages without any assumption.

\begin{definition}\label{def7_Det_PN}
	An FSA $\Scal=(X,T,X_0,\to,\Sig,\ell)$ is called {\it$\omega$-$(k_1,k_2)$-detectable} if
	\begin{align*}
		&(\forall \s\in \LM^{\omega}({\cal S}))(\forall\s_1\s_2\sqsubset\s)\\
		&[((|\s_1|\ge k_1)\wedge(|\s_2|\ge k_2))\implies(|\Mt(\Scal,\s_1,\s_2)|=1)].
	\end{align*}
\end{definition}

\begin{definition}\label{def7'_Det_PN}
	An FSA $\Scal=(X,T,X_0,\to,\Sig,\ell)$ is called {\it$*$-$(k_1,k_2)$-detectable} if
	\begin{align*}
		&(\forall \s\in \LM({\cal S}))(\forall\s_1\s_2\sqsubset\s)\\
		&[((|\s_1|\ge k_1)\wedge(|\s_2|\ge k_2))\implies(|\Mt(\Scal,\s_1,\s_2)|=1)].
	\end{align*}
\end{definition}


Consider a specification $X_{\spec}\subset X\times X$, where each state pair of $X_{\spec}$ is crucial
and the two states of such pairs must be distinguished. 

\begin{definition}\label{def8_Det_PN}
	An FSA $\Scal=(X,T,X_0,\to,\Sig,\ell)$ is called 
	{\it$\omega$-$(k_1,k_2)$-D-detectable with respect to specification $X_{\spec}\subset X\times X$} if
	\begin{align*}
		&(\forall \s\in \LM^{\omega}({\cal S}))(\forall\s_1\s_2\sqsubset\s)\\
		&[((|\s_1|\ge k_1)\wedge(|\s_2|\ge k_2))\implies\\
		&((\Mt(\Scal,\s_1,\s_2)\times \Mt(\Scal,\s_1,\s_2))\cap X_{\spec}=\emptyset)].
	\end{align*}
\end{definition}

\begin{definition}\label{def8'_Det_PN}
	An FSA $\Scal=(X,T,X_0,\to,\Sig,\ell)$ is called 
	{\it$*$-$(k_1,k_2)$-D-detectable with respect to specification $X_{\spec}\subset X\times X$} if
	\begin{align*}
		&(\forall \s\in \LM({\cal S}))(\forall\s_1\s_2\sqsubset\s)\\
		&[((|\s_1|\ge k_1)\wedge(|\s_2|\ge k_2))\implies\\
		&((\Mt(\Scal,\s_1,\s_2)\times \Mt(\Scal,\s_1,\s_2))\cap X_{\spec}=\emptyset)].
	\end{align*}
\end{definition}

One directly sees that $(k_1,k_2)$-detectability is stronger than $k_2$-delayed strong detectability
in the contexts of $\omega$-languages and languages.
The former is strictly stronger than the latter. Consider the FSA $\Scal$ in 
Fig.~\ref{fig26_Det_PN},
one directly sees
that the FSA is not $\omega$-$(0,0)$-detectable (by $\Mt(\Scal,\ep,\ep)=\Mt(\Scal,\ep)=\{s_0,s_0'\}$),
but is $\omega$-$0$-delayed strongly detectable. One also sees that $(k_1,k_2)$-detectability is stronger than
$(k_1,k_2)$-D-detectability. If we choose $X_{\spec}=\{(x,x')\in X\times X|x\ne x'\}$, then 
$(k_1,k_2)$-D-detectability reduces to $(k_1,k_2)$-detectability.

		\begin{figure}[htbp]
		\tikzset{global scale/.style={
    scale=#1,
    every node/.append style={scale=#1}}}
		\begin{center}
			\begin{tikzpicture}[global scale = 1.0,
				>=stealth',shorten >=1pt,thick,auto,node distance=2.5 cm, scale = 1.0, transform shape,
	->,>=stealth,inner sep=2pt,
				every transition/.style={draw=red,fill=red,minimum width=1mm,minimum height=3.5mm},
				every place/.style={draw=blue,fill=blue!20,minimum size=7mm}]
				\tikzstyle{emptynode}=[inner sep=0,outer sep=0]
				\node[state, initial] (s0) {$s_0$};
				\node[state] (s1) [right of = s0] {$s_1$};
				\node[state, initial, initial where = right] (s0') [right of = s1] {$s_0'$};

				\path[->]
				(s0) edge node {$t_1(a)$} (s1)
				(s0') edge node {$t_2(a)$} (s1)
				(s1) edge [loop above] node {$t_3(a)$} (s1)
				;
			\end{tikzpicture}
			\caption{An FSA that is not $\omega$-$(0,0)$-detectable but  $\omega$-$0$-delayed 
			strongly detectable.}
			\label{fig26_Det_PN}
		\end{center}
	\end{figure}

\subsection{Verifying $\omega$-$(k_1,k_2)$-detectability and $\omega$-$(k_1,k_2)$-D-detectability}
\label{subsec:omegaK1K2Det}

Next by using a procedure similar to the proof of Theorem~\ref{thm9_Det_PN}, we give polynomial-time
verification algorithms for $(k_1,k_2)$-detectability and $(k_1,k_2)$-D-detectability
in the context of $\omega$-languages.
These results strengthen the corresponding results
given in \cite{Shu2013DelayedDetectabilityDES} under Assumption~\ref{assum1_Det_PN}.

\begin{theorem}\label{thm10_Det_PN}
	The $\omega$-$(k_1,k_2)$-detectability of FSAs can be verified in polynomial time.
\end{theorem}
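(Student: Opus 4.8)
The plan is to adapt the concurrent-composition argument used for Theorem~\ref{thm9_Det_PN}, with the only change being that $k_1$ is now a fixed given parameter rather than an existentially quantified one. Since $\omega$-$(k_1,k_2)$-detectability fails exactly when there is an infinite label sequence $\s$ with a decomposition $\s_1\s_2\sqsubset\s$ such that $|\s_1|\ge k_1$, $|\s_2|\ge k_2$, and $|\Mt(\Scal,\s_1,\s_2)|>1$, I would look for a witness directly in $\Acc(\Obs(\CCa(\Scal)))$. Concretely, I claim $\Scal$ is not $\omega$-$(k_1,k_2)$-detectable if and only if in $\Acc(\Obs(\CCa(\Scal)))$ there is a transition sequence
\begin{align*}
	&x_0'\xrightarrow[]{s_1'}x_1'\xrightarrow[]{s_2'}x_2'\xrightarrow[]{s_3'}\cdots\xrightarrow[]{s_{3+k_2}'}x_{3+k_2}'
\end{align*}
with $x_0'\in X_0'$, $|\ell(s_1')|\ge k_1$, $x_3'(L)\ne x_3'(R)$, $|\ell(s_i')|=1$ for $i\in[4,3+k_2]$, where $s_2'$ and $s_3'$ together carry at least one more observed label, and where $\Scal$ has a cycle with a nonempty label sequence reachable from $x_{3+k_2}'(L)$. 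The key structural difference from Theorem~\ref{thm9_Det_PN} is that we no longer need a cycle at $x_1'$ to pump the prefix length up past an unknown $k$; instead we need the prefix $s_1'$ to realize an observed length of at least the fixed $k_1$, which is a reachability-with-length-bound condition.

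The main steps, in order, are: (1) prove the ``if'' direction by reading off from the witnessing path a pair of transition sequences in $\Scal$ with the same observed prefix of length $\ge k_1$, a subsequent common observed segment of length $\ge k_2$ ending in a pair of distinct states, followed by an infinite observed continuation from the left component — exactly as in Theorem~\ref{thm9_Det_PN} but with the pumping at $x_1'$ deleted; (2) prove the ``only if'' direction: given a violating $\s=\s_1\s_2\cdots$, the two distinct states in $\Mt(\Scal,\s_1,\s_2)$ come from two transition sequences that are synchronized in $\CCa(\Scal)$, the common prefix of observed length $\ge k_1$ lifts to $\Obs(\CCa(\Scal))$, the segment of observed length $k_2$ lifts similarly, and finiteness of $X$ yields the required reachable labeled cycle on the left component; (3) give the linear-time verification procedure: compute $\Acc(\Obs(\CCa(\Scal)))$, mark (as in Step~1 of Theorem~\ref{thm9_Det_PN}) the states from which a labeled cycle is reachable, then back-propagate the sets $X_{3+k_2}',\dots,X_4'$ through single observed labels and $\ep$-transitions, impose the disequality at level $3$, and finally check whether some state satisfying the level-$3$ condition and carrying the required distinction is reachable from an initial state by an observed path of length at least $k_1$; (4) bound the running time, obtaining $O((k_1+k_2)(2|X|^3|T_\ep|+|X|^4\sum_{\sigma\in\Sig}|\ell^{-1}(\sigma)|^2))$ or the obvious refinement, which is polynomial.

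The step I expect to be the main obstacle is correctly encoding the ``prefix of observed length at least $k_1$'' condition in the back-propagation. In Theorem~\ref{thm9_Det_PN} the prefix length was made arbitrarily large by pumping a labeled cycle through $x_1'$, so no counting was needed; here we must instead do a bounded forward search of depth $k_1$ (on the observable-transition layer of $\Acc(\Obs(\CCa(\Scal)))$, interleaved with $\ep$-transitions) to certify reachability of a suitable level-$3$ state by an observed path of length $\ge k_1$ from $X_0'$ — or, dually, keep a counter capped at $k_1$ during a single traversal. One must also be careful that, unlike the delayed-strong-detectability case, there is no longer a hidden cycle forcing unbounded prefixes, so the witness path genuinely has the fixed shape above and the equivalence does not quietly lose the ``$=1$'' requirement for short prefixes; this is exactly the subtlety illustrated by Fig.~\ref{fig26_Det_PN}. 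Once this bounded-length bookkeeping is handled, the rest is a routine replay of the proof of Theorem~\ref{thm9_Det_PN}.
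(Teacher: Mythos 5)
Your overall strategy coincides with the paper's: characterize the negation of $\omega$-$(k_1,k_2)$-detectability by a witness path in $\Acc(\Obs(\CCa(\Scal)))$ that reaches a pair of distinct states after the prefix, continues through $k_2$ single-observed-label segments, and ends at a state whose left component can reach a cycle with a nonempty label sequence in $\Scal$; the verification is then $O(k_1+k_2)$ rounds of layered reachability, each linear in the size of $\Acc(\Obs(\CCa(\Scal)))$. The genuine problem is the shape of your witness for the prefix. You require $|\ell(s_1')|\ge k_1$ \emph{and} that $s_2's_3'$ together carry at least one further observed label before the disequality test at $x_3'$, so your witness forces the violating prefix $\s_1$ to have observed length at least $k_1+1$. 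That is too strong: non-$(k_1,k_2)$-detectability can be witnessed \emph{only} at prefixes of observed length exactly $k_1$. Concretely, take a single initial state, a chain of $k_1-1$ observable transitions, a last observable transition branching to two distinct states $p\ne q$, and then transitions from both $p$ and $q$ with a common label to one state $r$ carrying an observable self-loop: here $|\Mt(\Scal,\s_1,\s_2)|=2$ exactly when $|\s_1|=k_1$ and equals $1$ for every longer prefix, so your condition finds no witness and wrongly certifies detectability. For $k_1=0$ the same failure occurs on the system of Fig.~\ref{fig26_Det_PN}, which you yourself cite as the relevant subtlety: its only violation is at $\s_1=\ep$, and your extra-label requirement excludes it. Note also that your algorithmic step (3) says ``reachable \dots{} by an observed path of length at least $k_1$'', which is the correct condition but contradicts the displayed witness.

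The repair is what the paper's condition \eqref{eqn81_Det_PN} does: decompose the prefix into $k_1$ segments $s_1',\dots,s_{k_1}'$ with $\ell(s_i')\in\Sig^+$ for each $i$, and impose $x_{k_1}'(L)\ne x_{k_1}'(R)$ at the end of the $k_1$-th segment (for $k_1=0$ the disequality is imposed directly on an initial state pair of $X_0'=X_0\times X_0$). A decomposition into $k_1$ segments each carrying at least one observed label exists if and only if the prefix has observed length at least $k_1$, with no spurious extra label, and it is checked by $k_1$ rounds of backward propagation through segments in $\hat\ep\{\varepsilon,\hat\ep\}^*$ --- equivalent in cost to the capped-counter forward search you propose. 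With that correction, both directions of your equivalence and your complexity bound go through exactly as in the proof of Theorem~\ref{thm9_Det_PN}.
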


\begin{proof}
	Consider an FSA $\Scal=(X,T,X_0,\to,\Sig,\ell)$ and 
	$\Acc(\Obs(\CCa(\Scal)))=(X',T',X_0',\to')$.
	Similarly to the proof of Theorem~\ref{thm9_Det_PN}, we can prove
	that $\Scal$ is not $\omega$-$(k_1,k_2)$-detectable if and only if in
	$\Acc(\Obs(\CCa(\Scal)))$,
	\begin{subequations}\label{eqn81_Det_PN}
		\begin{align}
			&\text{there exists a transition sequence }\nonumber\\
			&x_0'\xrightarrow[]{s_1'}x_1'\xrightarrow[]{s_2'}\cdots\xrightarrow[]{s_{k_1+k_2}'}x_{k_1+k_2}'
			\text{ satisfying}\label{eqn81_1_Det_PN}\\
			&x_0'\in X_0';x_1',\dots,x_{k_1+k_2}'\in X'; s_1',\dots,s_{k_1+k_2}'\in(T')^*;\label{eqn81_2_Det_PN}\\
			&\ell(s_1'),\dots,\ell(s'_{k_1})\in\Sig^+;x_{k_1}'(L)\ne x_{k_1}'(R);\label{eqn81_3_Det_PN}\\
			&|\ell(s_{1+k_1}')|=\cdots=|\ell(s_{k_1+k_2}')|=1;\\
			&\text{and in }\Scal,\text{ there exists a cycle with a nonempty label}\nonumber\\
			&\text{sequence reachable from }x_{k_1+k_2}'(L).\label{eqn81_4_Det_PN}
		\end{align}
	\end{subequations}

		\begin{figure*}
        \centering
\begin{tikzpicture}[>=stealth',shorten >=1pt,auto,node distance=2.6 cm, scale = 1.0, transform shape,
	->,>=stealth,inner sep=2pt,state/.style={shape=circle,draw,top color=red!10,bottom color=blue!30}]

	\node[elliptic state] (x0) {$\begin{matrix}x_0\\\bar x_0\end{matrix}$};
	\node(empty) (x1) [right of = x0] {$\cdots$};
	\node[elliptic state] (x1') [right of = x1] {$\begin{matrix}x_{k_1-1}\\\bar x_{k_1-1}\end{matrix}$};
	\node[elliptic state] (x3) [right of = x1'] {$\begin{matrix}x_{k_1}\\\nparallel\\\bar x_{k_1}\end{matrix}$};
	\node[elliptic state] (x3') [right of = x3] {$\begin{matrix}x_{k_1+1}\\\bar x_{k_1+1}\end{matrix}$};
	\node(empty) (x4) [below of = x3'] {$\cdots$};
	\node[elliptic state] (xk+3) [left of = x4] {$\begin{matrix}x_{k_1+k_2}\\\bar x_{k_1+k_2}\end{matrix}$};
	\node[elliptic state] (xk+4) [left of = xk+3] {$x_{1+k_1+k_2}$};
	\node[elliptic state] (xk+4') [left of = xk+4] {$x_{1+k_1+k_2}$};

	
	\path [->]
	(x0) edge node {$+$} (x1)
	(x1) edge node {$+$} (x1')
	(x1') edge node {$+$} (x3)
	(x3) edge node {$+$} (x3')
	(x3') edge node {$+$} (x4)
	(x4) edge node {$+$} (xk+3)
	(xk+3) edge (xk+4)
	(xk+4) edge node {$+$} (xk+4')
	;

        \end{tikzpicture}
		\caption{Sketch for verifying \eqref{eqn81_Det_PN}, i.e., negation of $\omega$-$(k_1,k_2)$-detectability.}
		\label{fig27_Det_PN}
	\end{figure*}

	Next we show that \eqref{eqn81_Det_PN} can be verified in linear time of the size of\\
	$\Acc(\Obs(\CCa(\Scal)))$. 
	We verify \eqref{eqn81_Det_PN} as in Fig.~\ref{fig27_Det_PN}.

	\begin{enumerate}
		\item Compute $\Acc(\Obs(\Scal))$, and then the set $X_{1+k_1+k_2}$ of states of
			$\Acc(\Obs(\Scal))$ that belong to a cycle with positive-length label sequences.
		\item Compute $\Acc(\Obs(\CCa(\Scal)))=(X',T',X_0',\to')$, and then $X_{k_1+k_2}'=\{
			(x,x')\\\in X'|(\exists x''\in X_{1+k_1+k_2})[x''\text{ is reachable from }x]\}$.
		\item Compute $X_{k_1+k_2-1}',\dots, X_{k_1+1}'$ in order as 
			$X_{i}'=\{(x,x')\in X'|(\exists (x'',x''')\in X'_{i+1})
			(\exists s\in\hat\ep\{\varepsilon\}^*)
			[(x,x')\xrightarrow[]{s}(x'',x''')]\}$.
		\item Compute $X_{k_1}'=\{(x,x')\in X'|[x'\ne x'']\wedge((\exists (x'',x''')\in X'_{k_1+1})
			(\exists s\in\hat\ep\{\varepsilon\}^*)
			[(x,x')\xrightarrow[]{s}(x'',x''')])\}$.
		\item Compute $X_{k_1-1}',\dots, X_{1}'$ in order as 
			$X_{i}'=\{(x,x')\in X'|(\exists (x'',x''')\in X'_{i+1})\\
			(\exists s\in\hat\ep\{\varepsilon,\hat\ep\}^*)
			[(x,x')\xrightarrow[]{s}(x'',x''')]\}$.
		\item Compute $\overline{X}_{0}'=\{(x,x')\in X'_0|(\exists (x'',x''')\in X'_{1})
			(\exists s$ $\in\{\varepsilon,\hat\ep\}^+\setminus\{\varepsilon\}^+)
			[(x,x')\xrightarrow[]{s}(x'',x''')]\}$. 
			$\overline{X}_0'\ne\emptyset$ if and only if $\Scal$ is not $\omega$-$(k_1,k_2)$-detectable.
	\end{enumerate}

	By similar analysis to Theorem~\ref{thm9_Det_PN}, one has the overall computational cost of verifying
	$\omega$-$(k_1,k_2)$-detectability does not exceed 
	twice of the size of $\Scal$ plus
	$(k_1+k_2+2)$ times of the size of $\Acc(\Obs(\CCa(\Scal)))$. Hence it takes time
	$O((k_1+k_2+2)(2|X|^3|T_\ep|+|X|^4\sum_{\sigma\in\Sig}|\ell^{-1}(\sigma)|^2))$
	to verify $\omega$-$(k_1,k_2)$-detectability. 
\end{proof}

We have shown that $\omega$-$(k_1,k_2)$-detectability is strictly stronger than $\omega$-$k_2$-delayed strong detectability.
Conversely, one also sees if an FSA is $\omega$-$K$-delayed strongly detectable then it is $\omega$-$(k,K)$-detectable for some $k\in\N$.
In more detail, we have the following proposition.

\begin{proposition}
	If an FSA $\Scal$ is $\omega$-$K$-delayed strongly detectable then it is $\omega$-$(k,K)$-detectable 
	for some $k\le|X|^2$,
	where $X$ is the state set of $\Scal$.
\end{proposition}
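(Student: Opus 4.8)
The plan is to combine the characterization of $\omega$-$K$-delayed strong detectability from the proof of Theorem~\ref{thm9_Det_PN} with the bookkeeping already used in the proof of Theorem~\ref{thm10_Det_PN}. Recall that $\Scal$ being $\omega$-$K$-delayed strongly detectable means there exists \emph{some} $k\in\N$ witnessing Definition~\ref{def6_Det_PN}; I must produce a concrete such $k$ bounded by $|X|^2$, thereby also establishing $\omega$-$(k,K)$-detectability for that $k$. So the real content is a pumping-type bound on the length of the prefix $\s_1$ that can possibly cause a violation.

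First I would argue the contrapositive in the style of Theorem~\ref{thm10_Det_PN}: $\Scal$ is \emph{not} $\omega$-$(k,K)$-detectable if and only if, in $\Acc(\Obs(\CCa(\Scal)))$, there is a transition sequence $x_0'\xrightarrow{s_1'}x_1'\xrightarrow{s_2'}\cdots\xrightarrow{s_{k+K}'}x_{k+K}'$ with $x_0'\in X_0'$, each $\ell(s_i')\in\Sig^+$ for $i\in[1,k]$, $x_k'(L)\ne x_k'(R)$, $|\ell(s_i')|=1$ for $i\in[k+1,k+K]$, and in $\Scal$ a cycle with a nonempty label sequence reachable from $x_{k+K}'(L)$ — this is exactly condition~\eqref{eqn81_Det_PN} with $k_1=k$, $k_2=K$. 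Then I would suppose, for contradiction, that $\Scal$ is $\omega$-$K$-delayed strongly detectable but is not $\omega$-$(k,K)$-detectable for every $k\le|X|^2$; in particular it is not $\omega$-$(|X|^2,K)$-detectable, so such a violating path exists with $k=|X|^2$. The initial segment $x_0'\xrightarrow{s_1'}x_1'\xrightarrow{}\cdots\xrightarrow{}x_{|X|^2}'$ passes through $|X|^2+1$ states of $X'=X\times X$, hence by pigeonhole two of $x_0',\dots,x_{|X|^2}'$ coincide, giving a cycle with a nonempty label sequence somewhere before the ``split'' state $x_{|X|^2}'$ with $x_{|X|^2}'(L)\ne x_{|X|^2}'(R)$. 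Splicing this cycle together with the split state, the $K$ further single-observation steps, and the reachable cycle with nonempty label, I obtain exactly configuration~\eqref{eqn80_Det_PN}, i.e. the negation of $\omega$-$K$-delayed strong detectability established inside the proof of Theorem~\ref{thm9_Det_PN} — contradiction. Hence $\Scal$ is $\omega$-$(k,K)$-detectable for some $k\le|X|^2$.

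The one point needing care is that the violating path for $\omega$-$(|X|^2,K)$-detectability has the first $|X|^2$ blocks $s_i'$ carrying \emph{nonempty} labels, whereas in~\eqref{eqn80_Det_PN} the cycle $s_2'$ with $\ell(s_2')\in\Sig^+$ and the subsequent single-step blocks must be fitted in; the pigeonhole-derived repeated state need not itself be the split state, so I would repeat the block between the two equal states (it has a nonempty label since it spans at least one of the $s_i'$ with $\ell(s_i')\in\Sig^+$), thereby pumping $|\s_1|$ arbitrarily large while keeping $x_{|X|^2}'(L)\ne x_{|X|^2}'(R)$ and keeping the reachable nonempty-label cycle. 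This is the only genuine obstacle; everything else is the same linear-time reachability bookkeeping already justified for Theorems~\ref{thm9_Det_PN} and~\ref{thm10_Det_PN}, so I would not reprove it. The bound $k\le|X|^2$ is exactly the number of distinct elements of $X\times X$, matching Corollary~\ref{cor4_Det_PN}.
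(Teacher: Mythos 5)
Your proposal is correct and follows essentially the same route as the paper: take a violating path for $\omega$-$(|X|^2,K)$-detectability from the characterization \eqref{eqn81_Det_PN}, apply the pigeonhole principle to the $|X|^2+1$ states of $X\times X$ visited in its initial segment to extract a nonempty-label cycle, and pump that cycle to contradict $\omega$-$K$-delayed strong detectability via \eqref{eqn80_Det_PN}. The only (cosmetic) difference is that you splice directly into configuration \eqref{eqn80_Det_PN}, whereas the paper phrases the same pumping step as non-$\omega$-$(k_1',K)$-detectability for all $k_1'\ge|X|^2$.
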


\begin{proof}
	Suppose on the contrary that $\Scal$ is not $\omega$-$(|X|^2,K)$-detectable, then by 
	definition it is not $\omega$-$(k,K)$-detectable for any $k<|X|^2$;
	and by the proof of Theorem~\ref{thm10_Det_PN},
	there is a sequence \eqref{eqn81_1_Det_PN} with $k_1=|X|^2$ and $k_2=K$. Since there exist at most $|X|^2$
	distinct subsets of $X$ with cardinality $2$, at least two of $x_1',\dots,x_{1+k_1}'$ are the same.
	Hence $\Scal$ is not $\omega$-$(k_1',K)$-detectable for any $k_1'\ge|X|^2$. Thus 
	$\Scal$ is not $\omega$-$K$-delayed strongly detectable.
\end{proof}

\begin{theorem}\label{thm11_Det_PN}
	The $\omega$-$(k_1,k_2)$-D-detectability of FSAs with respect to a specification can be verified in polynomial time.
\end{theorem}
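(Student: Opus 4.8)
The plan is to imitate the proof of Theorem~\ref{thm10_Det_PN} (hence of Theorem~\ref{thm9_Det_PN}): express the negation of $\omega$-$(k_1,k_2)$-D-detectability as the existence of a layered transition sequence in the accessible part of an observation automaton of a bounded product of $\Scal$, and then detect such a sequence by a backward-reachability scan in the style of Fig.~\ref{fig27_Det_PN}. The bookkeeping is the same as there: a run of $\CCa(\Scal)$ from $X_0\times X_0$ producing a label word $\s$ and ending at $(p,q)$ exists iff $p,q\in\Mt(\Scal,\s)$, and appending $k_2$ observable steps synchronized on a common label word $\s_2$ of length $k_2$ additionally forces $p,q\in\Mt(\Scal,\s_1,\s_2)$. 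The na\"ive attempt is therefore to keep \eqref{eqn81_Det_PN} and merely replace ``$x_{k_1}'(L)\ne x_{k_1}'(R)$'' by ``$(x_{k_1}'(L),x_{k_1}'(R))\in X_{\spec}$'', retaining \eqref{eqn81_4_Det_PN} so that the witnessing word $\s_1\s_2$ is a prefix of some $\s\in\LM^{\omega}(\Scal)$.

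The step I expect to be the main obstacle is that, once $X_{\spec}$ is arbitrary and $\Scal$ need not satisfy Assumption~\ref{assum1_Det_PN}, this na\"ive replacement is \emph{unsound}. In Theorem~\ref{thm10_Det_PN} the point is invisible: an infinite word $\s$ violating $(k_1,k_2)$-detectability yields, after the prefix $\s_1$, a state $p_1\in\Mt(\Scal,\s_1,\s_2)$ with an infinite-label continuation, and since one needs only two \emph{distinct} states of $\Mt(\Scal,\s_1,\s_2)$ — and distinctness is symmetric — one may always put $p_1$ on the left so that \eqref{eqn81_4_Det_PN} holds. For D-detectability the witnessing pair is pinned to $X_{\spec}$, and one can build small FSAs — e.g. one initial state, an $a$-transition into a branching state from which three $b$-transitions lead respectively to two deadlocks $p,q$ with $(p,q)\in X_{\spec}$ and to a state carrying an observable self-loop — that fail $\omega$-$(k_1,k_2)$-D-detectability although neither state of the only reachable $X_{\spec}$-pair has any continuation at all. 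So the liveness witness for $\s_1\s_2$ must be decoupled from the $X_{\spec}$-pair, and $\CCa(\Scal)$, tracking only \emph{pairs} of equally-labelled runs, cannot do this.

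The remedy I would adopt is to carry a \emph{third}, independent copy of $\Scal$ inside the composition: use the label-synchronized triple self-composition (state set $X\times X\times X$, initial states $X_0\times X_0\times X_0$, observable moves synchronizing all three coordinates on a common label and unobservable moves interleaved one coordinate at a time, defined exactly as in \eqref{eqn48_Det_PN} but with three slots), which still has at most $|X|^3$ states and polynomially many transitions. I would then prove that $\Scal$ is not $\omega$-$(k_1,k_2)$-D-detectable iff $\Acc(\Obs(\cdot))$ of this triple composition contains a transition sequence $x_0'\xrightarrow[]{s_1'}\cdots\xrightarrow[]{s_{k_1+k_2}'}x_{k_1+k_2}'$ with $x_0'$ an initial triple, with each of the first $k_1$ segments $s_i'$ carrying a nonempty label word, each of the last $k_2$ segments carrying a length-one label word, with the first two coordinates of $x_{k_1}'$ forming a pair in $X_{\spec}$, and such that in $\Scal$ a cycle with a nonempty label is reachable from the \emph{third} coordinate of $x_{k_1+k_2}'$. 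The ``only if'' direction feeds the run producing $\s$ into the third coordinate and the two runs through the witnessing $X_{\spec}$-pair into the first two — all three carry the common observed word, so they synchronize; the ``if'' direction reads off $\s_1=\ell(s_1'\cdots s_{k_1}')$ (length $\ge k_1$), $\s_2=\ell(s_{k_1+1}'\cdots s_{k_1+k_2}')$ (length $k_2$) and an infinite extension from the third-coordinate cycle, exactly as in Theorem~\ref{thm10_Det_PN}.

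With that characterization the verification is the same layered scan: compute $\Acc(\Obs(\Scal))$ and its states lying on a positive-length-label cycle; build the triple-composition observation automaton and the set of its states whose third coordinate reaches that cycle set; propagate this set backward through the last $k_2$ observable layers; at layer $k_1$ intersect with the constant-time test ``first two coordinates lie in $X_{\spec}$''; propagate backward through the first $k_1$ layers using $\hat\ep$- and $\varepsilon$-moves; and report non-emptiness at the initial layer. Each layer is a single linear pass over a polynomial-size automaton, so the check is polynomial (of higher degree than Theorem~\ref{thm10_Det_PN}, since the state space is cubed rather than squared), and a pigeonhole argument on the $\le|X|^3$ distinct state triples, in the spirit of Corollary~\ref{cor4_Det_PN}, would in addition show that $|X|^3$ layers always suffice.
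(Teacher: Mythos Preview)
Your analysis is correct, and in fact it exposes a genuine gap in the paper's own proof. The paper proceeds exactly along what you call the ``na\"ive'' route: it keeps the two-copy composition $\CCa(\Scal)$, replaces the inequality $x_{k_1}'(L)\ne x_{k_1}'(R)$ by the symmetrized test $(x_{k_1}'(L),x_{k_1}'(R))\in X_{\spec}$ or $(x_{k_1}'(R),x_{k_1}'(L))\in X_{\spec}$, and still requires the cycle in \eqref{eqn81_4_Det_PN} to be reachable from the \emph{left} coordinate of $x_{k_1+k_2}'$. Your counterexample --- an initial state with an $a$-step into a branching state whose $b$-successors are two short dead-end paths through $p,q$ (with $X_{\spec}=\{(p,q)\}$) and a third branch carrying an observable self-loop --- shows this equivalence fails without Assumption~\ref{assum1_Det_PN}: one has $p,q\in\Mt(\Scal,\s_1,\s_2)$ and $\s_1\s_2\sqsubset\s$ for the unique $\s\in\LM^{\omega}(\Scal)$, yet no $\CCa$-run through $(p,q)$ or $(q,p)$ can satisfy the liveness clause, because neither coordinate can reach a cycle. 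The reason the symmetrization suffices in Theorem~\ref{thm10_Det_PN} is precisely the one you isolate: there the only constraint on the pair is distinctness, and the state on the $\s$-realizing run is itself one of the two witnesses, so it may always be placed on the left. For D-detectability the witnessing pair is pinned to $X_{\spec}$ and may be disjoint from the $\s$-run, so two tracks are not enough.

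Your remedy --- a three-fold label-synchronized self-composition, with the $X_{\spec}$-test applied to the first two coordinates at layer $k_1$ and the liveness test applied to the third coordinate at layer $k_1+k_2$ --- is sound in both directions and yields a polynomial-time algorithm (with state space $|X|^3$ rather than $|X|^2$, so the degree is higher than in Theorem~\ref{thm10_Det_PN}, but the theorem only claims polynomial time). The paper's simpler characterization \emph{is} correct under Assumption~\ref{assum1_Det_PN}, since then every reachable state reaches a positive-label cycle and the liveness clause is vacuous; but since the paper explicitly dispenses with that assumption, your triple-composition argument is the one that actually proves the theorem as stated.
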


\begin{proof}
	Consider an FSA $\Scal=(X,T,X_0,\to,\Sig,\ell)$,
	$\Acc(\Obs(\CCa(\Scal)))=(X',\\T',X_0',\to')$, 
	and a specification $X_{\spec}\subset X\times X$.
	Similar to the proof of Theorem~\ref{thm10_Det_PN}, we can prove
	that $\Scal$ is not $\omega$-$(k_1,k_2)$-D-detectable with respect to $X_{\spec}$ if and only if
	in $\Acc(\Obs(\CCa(\Scal)))$,
	\begin{subequations}\label{eqn82_Det_PN}
		\begin{align}
			&\text{there exists a transition sequence }\nonumber\\
			&x_0'\xrightarrow[]{s_1'}x_1'\xrightarrow[]{s_2'}\cdots\xrightarrow[]{s_{k_1+k_2}'}x_{k_1+k_2}'
			\text{ satisfying}\label{eqn82_1_Det_PN}\\
			&x_0'\in X_0';x_1',\dots,x_{k_1+k_2}'\in X'; s_1',\dots,s_{k_1+k_2}'\in(T')^*;\label{eqn82_2_Det_PN}\\
			&\ell(s_1'),\dots,\ell(s'_{k_1})\in\Sig^+;x_{k_1}'(L)\ne x_{k_1}'(R);\label{eqn82_3_Det_PN}\\
			&|\ell(s_{1+k_1}')|=\cdots=|\ell(s_{k_1+k_2}')|=1;\\
			&(x_{k_1}'(L),x_{k_1}'(R))\text{ or }(x_{k_1}'(R),x_{k_1}'(L))\in X_{\spec};\\
			&\text{and in }\Scal,\text{ there exists a cycle with a nonempty label}\nonumber\\
			&\text{sequence reachable from }x_{k_1+k_2}'(L).\label{eqn82_4_Det_PN}
		\end{align}
	\end{subequations}

	By using almost the same procedure to that in the proof of Theorem~\ref{thm10_Det_PN}
	(only by replacing $X'_{k_1}$ to $X'_{k_1}\cap X_{\spec}$),
	\eqref{eqn82_Det_PN} can be checked with the same complexity as for verifying \eqref{eqn81_Det_PN}.
\end{proof}

\subsection{Verifying $*$-$(k_1,k_2)$-detectability and $*$-$(k_1,k_2)$-D-detectability}

Similarly to $\omega$-$(k_1,k_2)$-detectability and $\omega$-$(k_1,k_2)$-D-detectability,
the results for verifying $*$-$(k_1,k_2)$-detectability and $*$-$(k_1,k_2)$-D-detectability
are shown as follows.

\begin{theorem}\label{thm10'_Det_PN}
	The $*$-$(k_1,k_2)$-detectability of FSAs can be verified in polynomial time.
\end{theorem}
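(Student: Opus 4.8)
The plan is to mimic the proof of Theorem~\ref{thm9'_Det_PN} (the $*$-version of delayed strong detectability), adapting the argument of Theorem~\ref{thm10_Det_PN} to the $*$-setting. First I would establish a characterization: $\Scal$ is not $*$-$(k_1,k_2)$-detectable if and only if in $\Acc(\Obs(\CCa(\Scal)))$ there exists a transition sequence
\begin{align*}
	&x_0'\xrightarrow[]{s_1'}x_1'\xrightarrow[]{s_2'}\cdots\xrightarrow[]{s_{k_1+k_2}'}x_{k_1+k_2}'\\
	&x_0'\in X_0';\ x_1',\dots,x_{k_1+k_2}'\in X';\ s_1',\dots,s_{k_1+k_2}'\in(T')^*;\\
	&\ell(s_1'),\dots,\ell(s'_{k_1})\in\Sig^+;\ x_{k_1}'(L)\ne x_{k_1}'(R);\\
	&|\ell(s_{1+k_1}')|=\cdots=|\ell(s_{k_1+k_2}')|=1.
\end{align*}
Compared to the $\omega$-case in Theorem~\ref{thm10_Det_PN}, the difference is exactly that we drop the final requirement \eqref{eqn81_4_Det_PN} that a cycle with a nonempty label sequence be reachable from $x_{k_1+k_2}'(L)$; in the $*$-setting the witnessing observed word $\s_1\s_2$ is finite, so no such continuation is needed, just as the step from Theorem~\ref{thm9_Det_PN} to Theorem~\ref{thm9'_Det_PN} drops \eqref{eqn80_4_Det_PN}.

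For the ``if'' direction I would argue that from such a path in $\CCa(\Scal)$ one reads off, via the left components, two transition sequences of $\Scal$ with the same label $\s_1$ of length $\ge k_1$ reaching distinct states (witnessed by $x_{k_1}'(L)\ne x_{k_1}'(R)$), followed by a common observed suffix $\s_2$ of length exactly $k_2$ (hence $\ge k_2$); this gives $|\Mt(\Scal,\s_1,\s_2)|>1$ with $\s_1\s_2\in\LM(\Scal)$, contradicting $*$-$(k_1,k_2)$-detectability since $\s_1$ can be taken of length exactly $k_1$. For the ``only if'' direction, a violation yields $\s\in\LM(\Scal)$, $\s_1\s_2\sqsubset\s$ with $|\s_1|\ge k_1$, $|\s_2|\ge k_2$ and $|\Mt(\Scal,\s_1,\s_2)|>1$; truncating so that $|\s_1|=k_1$ and $|\s_2|=k_2$ and lifting the two distinguishing transition sequences into $\CCa(\Scal)$, then projecting through $\Obs$ to collapse unobservable moves, produces the desired path. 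Finally, I would describe a linear-time (in the size of $\Acc(\Obs(\CCa(\Scal)))$) procedure analogous to Fig.~\ref{fig27_Det_PN} but without the cycle-detection stages: compute backward the sets $X'_{k_1+k_2}=X',\ X'_{k_1+k_2-1},\dots,X'_{k_1+1}$ by $\hat\ep\{\varepsilon\}^*$-predecessors, then $X'_{k_1}$ imposing $x'\ne x''$, then $X'_{k_1-1},\dots,X'_1$ by $\hat\ep\{\varepsilon,\hat\ep\}^*$-predecessors, and finally check whether some initial state reaches $X'_1$ by a nonempty word not in $\{\varepsilon\}^+$. This gives overall cost not exceeding $(k_1+k_2+1)$ times the size of $\Acc(\Obs(\CCa(\Scal)))$, i.e.\ time $O((k_1+k_2+1)(2|X|^3|T_\ep|+|X|^4\sum_{\sigma\in\Sig}|\ell^{-1}(\sigma)|^2))$.

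The main obstacle I anticipate is getting the ``only if'' direction fully rigorous: specifically, justifying that when the observed word $\s_1$ has length exactly $k_1$ one can choose the lifted run in $\CCa(\Scal)$ so that each of the first $k_1$ composed segments $s_i'$ carries exactly one observed label (so that $\ell(s_i')\in\Sig^+$ can in fact be taken with $|\ell(s_i')|=1$), while each of the last $k_2$ segments carries exactly one label, absorbing the unobservable transitions into the $\hat\ep\{\varepsilon\}^*$-structure used by the algorithm; the bookkeeping linking $\Mt(\Scal,\s_1,\s_2)$, reachability in $\CCa(\Scal)$, and the $\varepsilon$/$\hat\ep$-labeling of $\Obs$ is where the routine-but-fiddly work lies, and it parallels exactly the corresponding passage in the proof of Theorem~\ref{thm9'_Det_PN}.
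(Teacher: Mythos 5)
Your proposal matches the paper's proof essentially verbatim: the paper likewise characterizes non-$*$-$(k_1,k_2)$-detectability by the existence in $\Acc(\Obs(\CCa(\Scal)))$ of exactly the path you describe (i.e., condition \eqref{eqn81_Det_PN} with the cycle requirement \eqref{eqn81_4_Det_PN} dropped), and then verifies it by the same backward computation $X'_{k_1+k_2}=X',\dots,X'_{k_1+1}$, then $X'_{k_1}$ with $x'\ne x''$, then $X'_{k_1-1},\dots,X'_1$, then $\overline{X}_0'$, with the same $(k_1+k_2+1)$-pass cost bound. The ``routine-but-fiddly'' lifting step you flag is exactly the part the paper also leaves implicit by appeal to the proof of Theorem~\ref{thm10_Det_PN}.
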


\begin{proof}
	Consider an FSA $\Scal=(X,T,X_0,\to,\Sig,\ell)$ and
	$\Acc(\Obs(\CCa(\Scal)))=(X',T',X_0',\to')$.
	Similarly to the proof of Theorem~\ref{thm10_Det_PN}, we can prove
	that $\Scal$ is not $*$-$(k_1,k_2)$-detectable if and only if in $\Acc(\Obs(\CCa(\Scal)))$,
	\begin{subequations}\label{eqn81'_Det_PN}
		\begin{align}
			&\text{there exists a transition sequence }\nonumber\\
			&x_0'\xrightarrow[]{s_1'}x_1'\xrightarrow[]{s_2'}\cdots\xrightarrow[]{s_{k_1+k_2}'}x_{k_1+k_2}'
			\text{ satisfying}\label{eqn81'_1_Det_PN}\\
			&x_0'\in X_0';x_1',\dots,x_{k_1+k_2}'\in X'; s_1',\dots,s_{k_1+k_2}'\in(T')^*;\label{eqn81'_2_Det_PN}\\
			&\ell(s_1'),\dots,\ell(s'_{k_1})\in\Sig^+;x_{k_1}'(L)\ne x_{k_1}'(R);\label{eqn81'_3_Det_PN}\\
			&|\ell(s_{1+k_1}')|=\cdots=|\ell(s_{k_1+k_2}')|=1.
		\end{align}
	\end{subequations}

		\begin{figure*}
        \centering
\begin{tikzpicture}[>=stealth',shorten >=1pt,auto,node distance=2.6 cm, scale = 1.0, transform shape,
	->,>=stealth,inner sep=2pt,state/.style={shape=circle,draw,top color=red!10,bottom color=blue!30}]

	\node[elliptic state] (x0) {$\begin{matrix}x_0\\\bar x_0\end{matrix}$};
	\node(empty) (x1) [right of = x0] {$\cdots$};
	\node[elliptic state] (x1') [right of = x1] {$\begin{matrix}x_{k_1-1}\\\bar x_{k_1-1}\end{matrix}$};
	\node[elliptic state] (x3) [right of = x1'] {$\begin{matrix}x_{k_1}\\\nparallel\\\bar x_{k_1}\end{matrix}$};
	\node[elliptic state] (x3') [right of = x3] {$\begin{matrix}x_{k_1+1}\\\bar x_{k_1+1}\end{matrix}$};
	\node(empty) (x4) [below of = x3'] {$\cdots$};
	\node[elliptic state] (xk+3) [left of = x4] {$\begin{matrix}x_{k_1+k_2}\\\bar x_{k_1+k_2}\end{matrix}$};

	
	\path [->]
	(x0) edge node {$+$} (x1)
	(x1) edge node {$+$} (x1')
	(x1') edge node {$+$} (x3)
	(x3) edge node {$+$} (x3')
	(x3') edge node {$+$} (x4)
	(x4) edge node {$+$} (xk+3)
	;

        \end{tikzpicture}
		\caption{Sketch for verifying \eqref{eqn81'_Det_PN}, i.e., negation of $*$-$(k_1,k_2)$-detectability.}
		\label{fig27'_Det_PN}
	\end{figure*}

	Next we show that \eqref{eqn81'_Det_PN} can be verified in linear time of the size of\\
	$\Acc(\Obs(\CCa(\Scal)))$. 
	We verify \eqref{eqn81'_Det_PN} as in Fig.~\ref{fig27'_Det_PN}.

	\begin{enumerate}
		\item Compute $\Acc(\Obs(\CCa(\Scal)))=(X',T',X_0',\to')$, and $X_{k_1+k_2}'=X'$.
		\item Compute $X_{k_1+k_2-1}',\dots, X_{k_1+1}'$ in order as 
			$X_{i}'=\{(x,x')\in X'|(\exists (x'',x''')\in X'_{i+1})
			(\exists s\in\hat\ep\{\varepsilon\}^*)
			[(x,x')\xrightarrow[]{s}(x'',x''')]\}$.
		\item Compute $X_{k_1}'=\{(x,x')\in X'|[x'\ne x'']\wedge((\exists (x'',x''')\in X'_{k_1+1})
			(\exists s\in\hat\ep\{\varepsilon\}^*)
			[(x,x')\xrightarrow[]{s}(x'',x''')])\}$.
		\item Compute $X_{k_1-1}',\dots, X_{1}'$ in order as 
			$X_{i}'=\{(x,x')\in X'|(\exists (x'',x''')\in X'_{i+1})
			(\exists s\in\hat\ep\{\varepsilon,\hat\ep\}^*)
			[(x,x')\xrightarrow[]{s}(x'',x''')]\}$.
		\item Compute $\overline{X}_{0}'=\{(x,x')\in X'_0|(\exists (x'',x''')\in X'_{1})
			(\exists s$ $\in\{\varepsilon,\hat\ep\}^+\setminus\{\varepsilon\}^+)
			[(x,x')\xrightarrow[]{s}(x'',x''')]\}$. 
			$\overline{X}_0'\ne\emptyset$ if and only if $\Scal$ is not $*$-$(k_1,k_2)$-detectable.
	\end{enumerate}

	By similar analysis to Theorem~\ref{thm10_Det_PN}, one has the overall computational cost of verifying
	$*$-$(k_1,k_2)$-detectability does not exceed 
	$(k_1+k_2+1)$ times of the size of $\Acc(\Obs(\CCa(\Scal)))$. Hence it takes time
	$O((k_1+k_2+1)(2|X|^3|T_\ep|+|X|^4\\\sum_{\sigma\in\Sig}|\ell^{-1}(\sigma)|^2))$
	to verify $*$-$(k_1,k_2)$-detectability. 
\end{proof}

\begin{theorem}\label{thm11'_Det_PN}
	The $*$-$(k_1,k_2)$-D-detectability of FSAs with respect to a specification can be verified in polynomial time.
\end{theorem}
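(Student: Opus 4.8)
The plan is to follow the same template used for Theorems~\ref{thm10_Det_PN} and~\ref{thm11_Det_PN}, since $*$-$(k_1,k_2)$-D-detectability is the natural finite-word analogue of $\omega$-$(k_1,k_2)$-D-detectability and the only difference from $*$-$(k_1,k_2)$-detectability is the replacement of the distinguishability condition $x'_{k_1}(L)\ne x'_{k_1}(R)$ by the stronger requirement that the pair $(x'_{k_1}(L),x'_{k_1}(R))$ (in some order) lies in the specification $X_{\spec}$. First I would state the structural characterization: $\Scal$ is not $*$-$(k_1,k_2)$-D-detectable with respect to $X_{\spec}$ if and only if in $\Acc(\Obs(\CCa(\Scal)))$ there exists a transition sequence
$x_0'\xrightarrow[]{s_1'}x_1'\xrightarrow[]{s_2'}\cdots\xrightarrow[]{s_{k_1+k_2}'}x_{k_1+k_2}'$
with $x_0'\in X_0'$, $s_i'\in(T')^*$, $\ell(s_1'),\dots,\ell(s'_{k_1})\in\Sig^+$, $|\ell(s'_{1+k_1})|=\cdots=|\ell(s'_{k_1+k_2})|=1$, and $(x_{k_1}'(L),x_{k_1}'(R))\text{ or }(x_{k_1}'(R),x_{k_1}'(L))\in X_{\spec}$ (note that no reachable-cycle condition is needed here, exactly as in Theorem~\ref{thm10'_Det_PN}, because we are in the $*$-setting and a finite witnessing word suffices).

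The correctness of this equivalence is proved exactly as in the proof of Theorem~\ref{thm10'_Det_PN}, observing that a transition sequence in $\CCa(\Scal)$ reaching a state pair $(x,x')$ after observing $\s_1$ followed by $\s_2$ corresponds to two transition sequences in $\Scal$ generating the common label sequence $\s_1\s_2$, one ending in $x$ and one ending in $x'$, so $\{x,x'\}\subset\Mt(\Scal,\s_1,\s_2)$; conversely any two states in $\Mt(\Scal,\s_1,\s_2)$ that belong to a forbidden pair arise as such a state pair. Passing through $\Obs(\CCa(\Scal))$ collapses the unobservable-transition structure so that the $k_1$ observable steps (each with nonempty label) and the subsequent $k_2$ observable steps with single-symbol labels can be counted directly, and the accessible part restricts attention to state pairs actually reachable from $X_0'$.

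For the algorithm, I would reuse verbatim the five-step backward/forward computation of the sets $X'_{k_1+k_2}=X'$, then $X'_{k_1+k_2-1},\dots,X'_{k_1+1}$ (backward along single-observable-step $\epsilon$-closures), then $X'_{k_1}$ with the extra condition, then $X'_{k_1-1},\dots,X'_1$ (backward along mixed observable/unobservable paths), then $\overline{X}_0'$; the only modification, as the statement of the theorem hints, is to intersect the set computed at level $k_1$ with the pairs admissible under $X_{\spec}$ (i.e. replace $X'_{k_1}$ by $X'_{k_1}\cap\{(x,x')\in X'\mid (x,x')\in X_{\spec}\text{ or }(x',x)\in X_{\spec}\}$, and note this automatically forces $x'\ne x''$ when $X_{\spec}$ only contains distinct pairs, or one keeps both conjuncts otherwise). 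Since $X_{\spec}\subset X\times X$ is given explicitly, this intersection costs $O(|X|^2)$ and does not change the asymptotic bound, so the overall cost is the same as for verifying $*$-$(k_1,k_2)$-detectability, namely $O((k_1+k_2+1)(2|X|^3|T_\ep|+|X|^4\sum_{\sigma\in\Sig}|\ell^{-1}(\sigma)|^2))$, which is polynomial. I do not expect a genuine obstacle here; the only point requiring mild care is the treatment of the order-insensitivity of the specification pair (checking both $(x,x')$ and $(x',x)$) and confirming that, because $\CCa(\Scal)$ is symmetric in its two components, this is harmless and the construction goes through unchanged.
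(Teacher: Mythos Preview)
Your proposal is correct and follows essentially the same route as the paper: the paper also derives the $*$-analogue of condition~\eqref{eqn82_Det_PN} (with the $X_{\spec}$ membership test on $x'_{k_1}$ and no reachable-cycle clause) and then observes that the verification procedure of Theorem~\ref{thm10'_Det_PN} carries over verbatim after replacing $X'_{k_1}$ by $X'_{k_1}\cap X_{\spec}$, with the same complexity. Your explicit remark on checking both orders $(x,x')$ and $(x',x)$ matches the paper's condition and is a harmless clarification.
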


\begin{proof}
	Consider an FSA $\Scal=(X,T,X_0,\to,\Sig,\ell)$, 
	$\Acc(\Obs(\CCa(\Scal)))=(X',\\T',X_0',\to')$, and
	a specification $X_{\spec}\subset X\times X$.
	Similar to the proof of Theorem~\ref{thm10'_Det_PN}, we can prove
	that $\Scal$ is not $*$-$(k_1,k_2)$-D-detectable with respect to $X_{\spec}$ if and only if
	in $\Acc(\Obs(\CCa(\Scal)))$,
	\begin{subequations}\label{eqn82'_Det_PN}
		\begin{align}
			&\text{there exists a transition sequence }\nonumber\\
			&x_0'\xrightarrow[]{s_1'}x_1'\xrightarrow[]{s_2'}\cdots\xrightarrow[]{s_{k_1+k_2}'}x_{k_1+k_2}'
			\text{ satisfying}\label{eqn82'_1_Det_PN}\\
			&x_0'\in X_0';x_1',\dots,x_{k_1+k_2}'\in X'; s_1',\dots,s_{k_1+k_2}'\in(T')^*;\label{eqn82'_2_Det_PN}\\
			&\ell(s_1'),\dots,\ell(s'_{k_1})\in\Sig^+;x_{k_1}'(L)\ne x_{k_1}'(R);\label{eqn82'_3_Det_PN}\\
			&|\ell(s_{1+k_1}')|=\cdots=|\ell(s_{k_1+k_2}')|=1;\\
			&(x_{k_1}'(L),x_{k_1}'(R))\text{ or }(x_{k_1}'(R),x_{k_1}'(L))\in X_{\spec}.
		\end{align}
	\end{subequations}

	By using almost the same procedure to that in the proof of Theorem~\ref{thm10'_Det_PN}
	(only by replacing $X'_{k_1}$ to $X'_{k_1}\cap X_{\spec}$),
	\eqref{eqn82'_Det_PN} can be checked with the same complexity as for verifying \eqref{eqn81'_Det_PN}.
\end{proof}

\section{Polynomial-time synthesis algorithms}\label{sec:synthesis}

In this section, we study the synthesis problems for enforcing variant notions of delayed
strong detectability of FSAs by using the verification methods given in the previous
sections. That is, given an undetectable FSA, whether one can make it detectable by
disabling several controllable transitions, and how to compute a set of 
controllable transitions to disable if the answer is yes. 
We only need to study $\omega$-$(k_1,k_2)$-detectability and $*$-$(k_1,k_2)$-detectability.
Other types of delayed strong detectability can be dealt with similarly.

A popular synthesis method in the supervisory control framework (\cite{Ramadge1987SupervisoryControlofDES,Cassandras2009DESbook})
is as follows: 
A {\it partially observed supervisor} is a function (in this framework, labeling function
$\ell$ is assumed by default to satisfy $\ell(t)=t$ for all $t\in T_o$, hence we assume $T_o=\Sig$.)
$$Sup:\LM(\Scal)\to\Gamma,$$
where $\Gamma=\{\gamma\subset T|T_{uc}\subset \gamma\}$ denotes the set of control decisions,
i.e., for an observed label sequence $s\in\LM(\Scal)$, $Sup(s)$ is the set of events
enabled upon the observation of $s$.
Then the language generated by the closed-loop system $\LM(Sup/\Scal)$ is defined recursively
by:
\begin{itemize}
	\item $\ep\in\LM(Sup/\Scal)$;
	\item for all $s\in\Sig^*$ and $\sigma\in\Sig$, $s\sigma\in\LM(Sup/\Scal)$ if and only if
		$s\in\LM(Sup/\Scal)$, $\sigma\in Sup(\ell(s))$, and $s\sigma\in\LM(\Scal)$.
\end{itemize}

Given an FSA $\Scal$ that is not $*$-$(k_1,k_2)$-detectable,
the synthesis method for enforcing $*$-$(k_1,k_2)$-detectability of $\Scal$ given
in \cite{Yin2019SynthesisDelayedStrongDetectabilityDES} with assumption $T_c\subset T_o$ is 
firstly to construct a $(k_1,k_2)$-observer that generalizes the conventional observer
(cf. \cite{Shu2007Detectability_DES}) by considering delayed information,
secondly to use the $(k_1,k_2)$-observer to compute a supervisor $Sup$ as an FSA
that enforces $*$-$(k_1,k_2)$-detectability of $\Scal$ if $Sup$ exists,
and finally to compute the parallel composition of $Sup$ and $\Scal$,
then the parallel composition is $*$-$(k_1,k_2)$-detectable. Note that the conventional 
observer is exponential of the size of $\Scal$, the $(k_1,k_2)$-observer is even larger
since its states contain two more components that record information on observed label
sequences before (related to $k_1$) and after (related to $k_2$) the time when state 
estimate is done.

We next give polynomial-time synthesis algorithms for enforcing delayed strong
detectability of FSAs by using the verification methods
given in Section~\ref{sec:verification} and Section~\ref{sec4:application}.
The synthesis problem considered in our paper is not totally the same as
the one in the supervisory control framework \cite{Shu2013SynthesisDetectabilityDES,Shu2013EnforcingDetectability,Yin2016uniform,Yin2019SynthesisDelayedStrongDetectabilityDES}.
For the supervisory control framework,
the synthesis problem is to disable controllable events according to observed labeling
sequences, but does not directly depend on structure of a DES. It is somehow the
output feedback control. However, our synthesis problem is to directly change part
of structure of the DES, i.e., to disable some controllable transitions. Both synthesis
processes can be carried out before the DES starts to run. 

For an FSA $\Scal$, the set of its
controllable transitions is denoted by $\Tt_c^{\Scal}$.

\begin{problem}[SynEnfDSD]\label{prob1_Det_PN}
	\begin{enumerate}
		\item\label{item1_synthesis}
			Given a non-$\omega$-$(k_1,k_2)$-detectable (resp. 
			non-$*$-$(k_1,k_2)$-detectable) FSA $\Scal$, determine whether there is a subset 
			$\Tt_c$ of controllable transitions such that the new FSA $\Scal_{\setminus\Tt_c}$ 
			obtained from $\Scal$ by disabling all transitions of $\Tt_c$
			is $\omega$-$(k_1,k_2)$-detectable (resp. $*$-$(k_1,k_2)$-detectable).
		\item\label{item2_synthesis}
			If $\Tt_c$ exists, how to compute $\Tt_c$.
	\end{enumerate}
\end{problem}

We firstly give a theorem that solves Item~\ref{item1_synthesis} of Problem~\ref{prob1_Det_PN}.

\begin{theorem}\label{thm12_Det_PN}
	Consider a non-$\omega$-$(k_1,k_2)$-detectable (resp. non-$*$-$(k_1,k_2)$-detectable) 
	FSA $\Scal$ and two subsets $\Tt_c$ and $\Tt_c'$ of its controllable transitions
	with $\Tt_c\subset\Tt_c'$. If $\Scal_{\setminus
	\Tt_c}$ is $\omega$-$(k_1,k_2)$-detectable (resp. $*$-$(k_1,k_2)$-detectable),
	then $\Scal_{\setminus
	\Tt_c'}$ is also $\omega$-$(k_1,k_2)$-detectable (resp. $*$-$(k_1,k_2)$-detectable).
\end{theorem}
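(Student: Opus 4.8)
The plan is to argue by monotonicity of the characterization of non-detectability established in the proof of Theorem~\ref{thm10_Det_PN} (and its $*$-analog, Theorem~\ref{thm10'_Det_PN}). The key observation is that disabling more controllable transitions only removes transitions and states from $\Scal$, hence only removes transitions and states from the derived structures $\CCa(\cdot)$, $\Obs(\CCa(\cdot))$, and $\Acc(\Obs(\CCa(\cdot)))$. Since the witness for non-$(k_1,k_2)$-detectability is the existence of a certain transition sequence (conditions \eqref{eqn81_Det_PN} in the $\omega$-case, \eqref{eqn81'_Det_PN} in the $*$-case), fewer transitions can only make such witnesses disappear, never appear. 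So if $\Scal_{\setminus\Tt_c'}$ were not detectable, the witnessing sequence it exhibits would already be present in $\Scal_{\setminus\Tt_c}$, contradicting that $\Scal_{\setminus\Tt_c}$ is detectable.

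\textbf{First I would} make precise the containment $\Scal_{\setminus\Tt_c'}\sqsubseteq\Scal_{\setminus\Tt_c}$: since $\Tt_c\subset\Tt_c'$, the transition relation of $\Scal_{\setminus\Tt_c'}$ is a subset of that of $\Scal_{\setminus\Tt_c}$ (both have the same state set $X$ and same initial states $X_0$). I would then observe that $\CCa$ is monotone in this sense---by its defining construction \eqref{eqn48_Det_PN}, a transition of $\CCa(\Scal')$ exists iff the corresponding component transition(s) exist in $\Scal'$, so $\to'$ of $\CCa(\Scal_{\setminus\Tt_c'})$ is contained in $\to'$ of $\CCa(\Scal_{\setminus\Tt_c})$. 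Likewise $\Obs$ is monotone (a $\hat\epsilon$- or $\varepsilon$-transition of $\Obs(\Scal')$ is witnessed by an underlying transition of $\Scal'$), and taking the accessible part only shrinks the structure further. Thus every transition sequence that exists in $\Acc(\Obs(\CCa(\Scal_{\setminus\Tt_c'})))$ also exists in $\Acc(\Obs(\CCa(\Scal_{\setminus\Tt_c}))))$ --- with the same left/right labels, so the label conditions $\ell(s_i')\in\Sig^+$ and $|\ell(s_i')|=1$ are inherited, and the inequality $x_{k_1}'(L)\ne x_{k_1}'(R)$ is a property of the state itself.

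\textbf{Next I would} handle the one extra ingredient in the $\omega$-case, namely condition \eqref{eqn81_4_Det_PN}: the existence, in $\Scal_{\setminus\Tt_c'}$, of a cycle with a nonempty label sequence reachable from $x_{k_1+k_2}'(L)$. A cycle and a reachability path in $\Scal_{\setminus\Tt_c'}$ are, transition for transition, also present in $\Scal_{\setminus\Tt_c}$ (fewer transitions removed), and the label of each transition is unchanged; hence this condition too is inherited upward. Assembling: if $\Scal_{\setminus\Tt_c'}$ is not $\omega$-$(k_1,k_2)$-detectable, then \eqref{eqn81_Det_PN} holds for it, hence (by the above inclusions) \eqref{eqn81_Det_PN} holds for $\Scal_{\setminus\Tt_c}$, so $\Scal_{\setminus\Tt_c}$ is not $\omega$-$(k_1,k_2)$-detectable --- the contrapositive of the claim. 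The $*$-case is identical but simpler, using \eqref{eqn81'_Det_PN} in place of \eqref{eqn81_Det_PN} and dropping the cycle condition.

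\textbf{The main obstacle} --- really the only point needing care rather than a routine check --- is verifying that \emph{accessibility} behaves correctly: removing transitions could in principle render some state inaccessible in $\Scal_{\setminus\Tt_c'}$ that is accessible in $\Scal_{\setminus\Tt_c}$, but this only helps us, since it shrinks $\Acc(\Obs(\CCa(\Scal_{\setminus\Tt_c'})))$; what one must confirm is that it never \emph{adds} an accessible state or transition, which follows because an accessibility witness (a path from an initial state) in the smaller automaton is also a path in the larger one. A clean way to package all of this is to state and use a small monotonicity lemma: if $\Scal_1$ is a sub-FSA of $\Scal_2$ (same $X$, $X_0$, $\Sig$, $\ell$, and $\to_1\subset\to_2$), then the set of transition sequences of $\Acc(\Obs(\CCa(\Scal_1)))$ is contained in that of $\Acc(\Obs(\CCa(\Scal_2)))$, and every cycle-with-nonempty-label reachable from a state in $\Scal_1$ is also one in $\Scal_2$; the theorem is then immediate from the if-and-only-if characterizations in Theorems~\ref{thm10_Det_PN} and~\ref{thm10'_Det_PN}.
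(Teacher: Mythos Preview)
Your proposal is correct and follows essentially the same approach as the paper: argue the contrapositive by invoking the if-and-only-if characterizations of non-detectability (conditions \eqref{eqn81_Det_PN} and \eqref{eqn81'_Det_PN}) and observe that, since $\Tt_c\subset\Tt_c'$ implies every transition of $\Scal_{\setminus\Tt_c'}$ is a transition of $\Scal_{\setminus\Tt_c}$, any witnessing transition sequence (and reachable labeled cycle, in the $\omega$-case) in the former is also present in the latter. Your write-up is somewhat more explicit than the paper's about the monotonicity of each intermediate construction ($\CCa$, $\Obs$, $\Acc$), but the argument is the same.
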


\begin{proof}
	Assume that $\Scal_{\setminus\Tt_c'}$ is not $\omega$-$(k_1,k_2)$-detectable. 
	Then by Theorem~\ref{thm10_Det_PN}, in $\Acc(\Obs(\CCa(\Scal_{\setminus\Tt_c'})))$,
	\eqref{eqn81_Det_PN} holds. Since every transition of $\Scal_{\setminus\Tt_c'}$ is
	also a transition of $\Scal_{\setminus\Tt_c}$, one has every transition of
	$\Acc(\Obs(\CCa(\Scal_{\setminus\Tt_c'})))$ is also a transition of
	$\Acc(\Obs(\CCa(\Scal_{\setminus\Tt_c})))$.
	Then in $\Acc(\Obs(\CCa(\Scal_{\setminus\Tt_c})))$, \eqref{eqn81_Det_PN} also holds.
	Then also by Theorem~\ref{thm10_Det_PN}, $\Scal_{\setminus\Tt_c}$ is 
	not $\omega$-$(k_1,k_2)$-detectable either.

	The case for $*$-$(k_1,k_2)$-detectability can be proved analogously. 
\end{proof}

\begin{corollary}\label{cor5_Det_PN}
	Consider a non-$\omega$-$(k_1,k_2)$-detectable (resp. non-$*$-$(k_1,k_2)$-detectable) 
	FSA $\Scal$. If $\Scal_{\setminus\Tt_c^{\Scal}}$ is not $\omega$-$(k_1,k_2)$-detectable 
	(resp. $*$-$(k_1,k_2)$-detectable), then Item~\ref{item1_synthesis} of Problem~\ref{prob1_Det_PN}
	has no solution.
\end{corollary}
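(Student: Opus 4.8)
The plan is to derive the result as an immediate consequence of Theorem~\ref{thm12_Det_PN} applied to the extreme case $\Tt_c' = \Tt_c^{\Scal}$. First I would observe that any admissible solution $\Tt_c$ in Item~\ref{item1_synthesis} of Problem~\ref{prob1_Det_PN} is by definition a subset of the controllable transitions, so $\Tt_c \subseteq \Tt_c^{\Scal}$. Thus I would suppose, for contradiction, that Item~\ref{item1_synthesis} has a solution, i.e.\ there exists $\Tt_c \subseteq \Tt_c^{\Scal}$ such that $\Scal_{\setminus \Tt_c}$ is $\omega$-$(k_1,k_2)$-detectable (resp.\ $*$-$(k_1,k_2)$-detectable).

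Next I would invoke Theorem~\ref{thm12_Det_PN} with the pair $\Tt_c \subseteq \Tt_c^{\Scal}$: since $\Scal_{\setminus \Tt_c}$ is $\omega$-$(k_1,k_2)$-detectable (resp.\ $*$-$(k_1,k_2)$-detectable), the theorem guarantees that $\Scal_{\setminus \Tt_c^{\Scal}}$ is also $\omega$-$(k_1,k_2)$-detectable (resp.\ $*$-$(k_1,k_2)$-detectable). But this directly contradicts the hypothesis of the corollary, which states that $\Scal_{\setminus \Tt_c^{\Scal}}$ is \emph{not} $\omega$-$(k_1,k_2)$-detectable (resp.\ not $*$-$(k_1,k_2)$-detectable). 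Hence no such $\Tt_c$ can exist, and Item~\ref{item1_synthesis} of Problem~\ref{prob1_Det_PN} has no solution. The two cases ($\omega$ and $*$) are handled identically, each using the corresponding half of Theorem~\ref{thm12_Det_PN}.

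There is essentially no obstacle here: the corollary is a one-line monotonicity argument, and the only point requiring a moment's care is the observation that disabling the full set $\Tt_c^{\Scal}$ of controllable transitions is the ``most aggressive'' admissible intervention, so it is the best candidate for achieving detectability — if even it fails, every smaller candidate fails by the monotonicity established in Theorem~\ref{thm12_Det_PN}. I would state this explicitly, since it is the conceptual content of the corollary even though the formal deduction is trivial. No further lemmas or constructions are needed.
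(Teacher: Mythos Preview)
Your proposal is correct and matches the paper's approach exactly: the paper states Corollary~\ref{cor5_Det_PN} immediately after Theorem~\ref{thm12_Det_PN} with no explicit proof, treating it as the obvious monotonicity consequence you describe. Your contrapositive argument via $\Tt_c \subseteq \Tt_c^{\Scal}$ and an application of Theorem~\ref{thm12_Det_PN} is precisely the intended one-line justification.
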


By Corollary~\ref{cor5_Det_PN}, in order to solve Item~\ref{item1_synthesis} of
Problem~\ref{prob1_Det_PN}, we could check whether $\Scal_{\setminus\Tt_c^{\Scal}}$
is detectable. Hence Item~\ref{item1_synthesis} can be solved in polynomial time.
Next, we assume $\Scal_{\setminus\Tt_c^{\Scal}}$ is detectable, and study how to
compute a subset $\Tt_c\subset\Tt_c^{\Scal}$ as small as possible such that
$\Scal_{\setminus\Tt_c}$ is detectable. To this end, taking $*$-$(k_1,k_2)$-detectability
for example, we need to compute exactly the corresponding $X'_{k_1+k_2},X'_{k_1+k_2-1},\dots,
X'_{1},\overline{X}_0'$ for $\Scal$ as in the proof of Theorem~\ref{thm10_Det_PN}
without any redundant elements, which means that from each element of $\overline X_0'$
there is a transition sequence to some element of $X'_{k_1+k_2}$ through $X'_1,\dots,
X'_{k_1+k_2-1}$ one by one. After that, we try to find as few as possible controllable
transitions to disable in order to cut off all such transition sequences from 
$\overline X_0'$ to $X'_{k_1+k_2}$ through $X'_1,\dots,X'_{k_1+k_2-1}$. Then the obtained
FSA becomes detectable. The details are shown in the following subsections. 

\subsection{Synthesizing $\omega$-$(k_1,k_2)$-detectability}

We are given a non-$\omega$-$(k_1,k_2)$-detectable FSA $\Scal=(X,T,X_0,\to,\Sig,\ell)$ and
compute $\Acc(\CCa(\Scal))=(X',T',X_0',\to')$. Then in $\Acc(\CCa(\Scal))$,
\eqref{eqn81_Det_PN} holds. The central idea of synthesizing its $\omega$-$(k_1,k_2)$-detectability
is to cut off all sequences shown in \eqref{eqn81_Det_PN}. In detail, given $k_1,k_2\in\N$,
we do computations based on
$\Acc(\CCa(\Scal))$ and $\Acc(\Scal)$ as follows:
	\begin{enumerate}
		\item\label{item3_synthesis} Compute semiautomaton $\Scal'_{\min\{1,k_1\}}$:
			\begin{itemize}
				\item $k_1=0$: $\Scal'_0:=\Acc(\CCa(\Scal))$,
					mark all states $(x,x')$ of $\Scal'_0$ with $x\ne x'$.
				\item $k_1=1$: Compute all transition sequences from all initial states of $X_0'$
					under an observable event sequence ended with a state $(x,x')\in X'$
					satisfying $x\ne x'$, mark $(x,x')$, where observable event sequences are event 
					sequences that contain at least one observable event, i.e.,
					in $(T')^+\setminus(T'_{\ep})^+$.
				\item $k_1>1$: Compute all transition sequences from all initial states of $X_0'$
					under an observable event sequence. Mark the terminal states of these transition 
					sequences. 
			\end{itemize}
		\item\label{item4_synthesis} in case of $k_1>2$:
			Compute semiautomata $\Scal'_2,\dots,\Scal'_{k_1-1}$
			 in order as follows:
			for all $i\in[2,k_1-1]$, from each marked state of $\Scal'_{i-1}$,
			outside $\Scal'_1\cup\cdots\cup\Scal'_{i-1}$,
			compute all transition sequences under an event sequence initialized with
			an observable event, i.e., an event sequence in $\hat\ep\{\varepsilon,
			\hat\ep\}^*$. Mark the terminal states of these transition sequences. 
			(For each possible $i$, states of $\Scal'_i$ would be renamed in order to distinguish
			them from states of $\Scal'_1\cup\cdots\cup\Scal'_{i-1}$. The same would be done as follows
			in order to make $\Scal'_{\min\{1,k_1\}},\dots,\Scal'_{k_1+k_2}$ 
			have pairwise disjoint sets of states.)
		\item\label{item5_synthesis} in case of $k_1>1$:
			Compute semiautomaton $\Scal'_{k_1}$: from each marked state of $\Scal'_{k_1-1}$,
			outside $\Scal'_1\cup\cdots\cup\Scal'_{k_1-1}$,
			compute all transition sequences under an event sequence initialized with
			an observable event, where these transition sequences are also
			ended with a state $(x,x')$ with $x\ne x'$ in $X$, mark $(x,x')$.
		\item\label{item6_synthesis} in case of $k_2>0$:
			Compute semiautomata $\Scal'_{k_1+1},\dots,\Scal'_{k_1+k_2}$
			in order as follows: for all $i\in[k_1+1,k_1+k_2]$,
			from each marked state of $\Scal'_{i-1}$,
			outside $\Scal'_{\min\{1,k_1\}}\cup\cdots\cup\Scal'_{i-1}$,
			compute all transition sequences under an event sequence initialized with
			an observable event followed by only unobservable events, i.e., event sequences
			in $\hat\ep\{\varepsilon\}^*$. 
			Mark all its states.
		\item\label{item7_synthesis} Compute $X_{1+k_1+k_2}=\{x\in X|[(\exists x'\in X)
			[$either $(x,x')$ or $(x',x)$ is a marked state of $\Scal'_{k_1+k_2}]]\wedge
			[$some cycle of $\Acc(\Scal)$ with nonempty
			label sequence is reachable from $x]\}$.
			Compute semiautomaton $\Scal_{1+k_1+k_2}$ as follows: from each state of $X_{1+k_1+k_2}$,
			compute all transition sequences ended with a state $x$ that belongs to a cycle of 
			$\Acc(\Scal)$ with nonempty label sequence, mark $x$.
			Regard $X_{1+k_1+k_2}$ as the initial 
			state set of $\Scal_{1+k_1+k_2}$ such that $\Scal_{1+k_1+k_2}$ becomes an FSA.
			Remove every state $(x,x')$ of $\Scal'_{k_1+k_2}$ such that neither $x$ nor
			$x'$ belongs to $X_{1+k_1+k_2}$. 
		\item\label{item8_synthesis} in case of $k_1>1$ or $k_2>0$:
			Remove all states of $\Scal'_{\min\{1,k_1\}}\cup\cdots\cup\Scal'
			_{k_1+k_2-1}$ (and hence the corresponding transitions)
			from which none of states of $\Scal'_{k_1+k_2}$
			is reachable (in semiautomaton $\Scal'_{\min\{1,k_1\}}\cup\cdots\cup\Scal'_{k_1+k_2}$).
		\item\label{item9_synthesis} For $\Scal_{1+k_1+k_2}$, try to choose to disable
			controllable transitions
			to cut off all transition sequences from initial states to marked states.
			If this can be done, then disabling these controllable transitions can 
			make $\Scal$ become $\omega$-$(k_1,k_2)$-detectable. Otherwise, additionally choose 
			other controllable transitions in $\Scal'_{k_1+k_2}$, $\dots$,\\
			$\Scal'_{\min\{1,k_1\}}$ to disable to cut off transition sequences
			from $\Scal'_{\min\{1,k_1\}}$ to $\Scal'_{k_1+k_2}$ 
			ended with a state $(x,x')$ of $\Scal'_{k_1+k_2}$ 
			such that $x$ (or $x'$) is an initial state of $\Scal_{1+k_1+k_2}$ and a marked 
			state is reachable from $x$ (or $x'$).
	\end{enumerate}

Let us intuitively explain the above procedure. Steps~\ref{item3_synthesis} 
through~\ref{item6_synthesis} computes semiautomata $\Scal'_{\min\{1,k_1\}},\dots,\Scal'_{k_1+k_2}$ 
with disjoint
state sets, and collects all transition sequences in \eqref{eqn81_1_Det_PN} (with possible redundant ones).
Step~\ref{item7_synthesis} computes FSA $\Scal_{1+k_1+k_2}$, and collects exactly all transition sequences
in \eqref{eqn81_4_Det_PN}. Step~\ref{item8_synthesis} removes all redundant transition sequences
obtained in Steps~\ref{item3_synthesis} through~\ref{item6_synthesis}, which results in that semiautomaton 
$\Scal'_{\min\{1,k_1\}}\cup\cdots\cup\Scal'_{k_1+k_2}$ exactly collects all transition sequences in 
\eqref{eqn81_1_Det_PN}. Step~\ref{item9_synthesis} chooses controllable transitions to disable in order
to cut off transition sequences from $\Scal'_{\min\{1,k_1\}}$ to $\Scal'_{k_1+k_2}$
and transitions from initial states to marked states in $\Scal_{1+k_1+k_2}$
that violate $\omega$-$(k_1,k_2)$-detectability.

\begin{example}\label{exam8_Det_PN}
	Consider FSA $\Scal$ shown in Fig.~\ref{fig28_Det_PN} and 
	$\Acc(\CCa(\Scal))$ shown in Fig.~\ref{fig29_Det_PN}. In $\Acc(\CCa(\Scal))$, there is a
	transition sequence $(s_0,s_0)\xrightarrow[]{(t_1,t_1)}(s_0,s_0)\xrightarrow[]{(t_3,t_3)}
	(s_1,s_2)\xrightarrow[]{(t_4,t_4)}(s_1,s_1)\xrightarrow[]{(t_4,t_4)}(s_1,s_1)$ 
	with all events observable, where $(s_0,s_0)$ is an initial state, $(s_0,s_0)\xrightarrow[]
	{(t_1,t_1)}(s_0,s_0)$ and $(s_1,s_1)\xrightarrow[]{(t_4,t_4)}(s_1,s_1)$ are observable 
	self-loops, and $s_1\ne s_2$.
	In addition, in $\Acc(\Scal)$, there is a self-loop $s_1\xrightarrow[]{t_4}
	s_1$ with $t_4$ observable. Then by the proof of Theorem~\ref{thm10_Det_PN}, we have $\Scal$
	is not $\omega$-$(k_1,k_2)$-detectable for any $k_1,k_2\in\N$.
	\begin{figure}[htbp]
		\tikzset{global scale/.style={
    scale=#1,
    every node/.append style={scale=#1}}}
		\begin{center}
			\begin{tikzpicture}[global scale = 1.0,
				>=stealth',shorten >=1pt,thick,auto,node distance=2.5 cm, scale = 1.0, transform shape,
	->,>=stealth,inner sep=2pt,
				every transition/.style={draw=red,fill=red,minimum width=1mm,minimum height=3.5mm},
				every place/.style={draw=blue,fill=blue!20,minimum size=7mm}]
				\tikzstyle{emptynode}=[inner sep=0,outer sep=0]
				\node[state, initial, initial where = above] (s0) {$s_0$};
				\node[state] (s1) [right of = s0] {$s_1$};
				\node[state] (s2) [left of = s0] {$s_2$};
				\node[state] (s4) [below of = s1] {$s_4$};

				\path[->]
				(s0) edge node [above, sloped] {$t_3(b)$} (s1)
				(s0) edge node [above, sloped] {$t_3(b)$} (s2)
				(s0) edge [loop below] node [below, sloped] {$\begin{matrix}t_1(a)\\t_2(\ep)\end{matrix}$} (s0)
				(s1) edge [loop right] node [above, sloped] {$t_4(b)$} (s1)
				(s0) edge node [above, sloped] {$t_5(c),t_6(c)$} (s4)
				;

				\draw[->] (s2) .. controls (0,1.5) .. node [above, sloped] {$t_4(b)$} (s1);
			\end{tikzpicture}
			\caption{An FSA.}
			\label{fig28_Det_PN}
		\end{center}
	\end{figure}
	\begin{figure}[htbp]
		\tikzset{global scale/.style={
    scale=#1,
    every node/.append style={scale=#1}}}
		\begin{center}
			\begin{tikzpicture}[global scale = 1.0,
				>=stealth',shorten >=1pt,thick,auto,node distance=2.5 cm, scale = 1.0, transform shape,
	->,>=stealth,inner sep=2pt,
				every transition/.style={draw=red,fill=red,minimum width=1mm,minimum height=3.5mm},
				every place/.style={draw=blue,fill=blue!20,minimum size=7mm}]
				\tikzstyle{emptynode}=[inner sep=0,outer sep=0]
				\node[state, initial] (s0s0) {$s_0,s_0$};
				\node[state] (s4s4) [above of = s0s0] {$s_4,s_4$};
				\node[state] (s2s1) [right of = s0s0] {$s_2,s_1$};
				\node[state] (s1s2) [above of = s2s1] {$s_1,s_2$};
				\node[state] (s2s2) [below of = s2s1] {$s_2,s_2$};
				\node[state] (s1s1) [right of = s2s1] {$s_1,s_1$};

				\path[->]
				(s0s0) edge [loop below] node [below, sloped] {$\begin{matrix}(t_1,t_1)\\(t_2,\ep)\\(\ep,t_2)\end{matrix}$} (s0s0)
				(s0s0) edge node [above, sloped] {$\begin{matrix}(t_5,t_6)\\(t_6,t_5)\end{matrix}$} (s4s4)
				(s0s0) edge node [above, sloped] {$(t_3,t_3)$} (s1s2)
				(s0s0) edge node [above, sloped] {$(t_3,t_3)$} (s2s1)
				(s0s0) edge node [above, sloped] {$(t_3,t_3)$} (s2s2)
				(s1s2) edge node [above, sloped] {$(t_4,t_4)$} (s1s1)
				(s2s1) edge node [above, sloped] {$(t_4,t_4)$} (s1s1)
				(s2s2) edge node [above, sloped] {$(t_4,t_4)$} (s1s1)
				(s1s1) edge [loop right] node [above, sloped] {$(t_4,t_4)$} (s1s1)
				;

				\draw[->] (s0s0) .. controls (2.5,4.5) .. node [above, sloped] {$(t_3,t_3)$} (s1s1);
			\end{tikzpicture}
			\caption{FSA $\Acc(\CCa(\Scal))$, where FSA $\Scal$ is shown in Fig.~\ref{fig28_Det_PN}.}
			\label{fig29_Det_PN}
		\end{center}
	\end{figure}

	We enforce $\omega$-$(2,2)$-detectability of $\Scal$.
	Following the above Steps~\ref{item3_synthesis} through~\ref{item8_synthesis}, we draw
	as in Fig.~\ref{fig30_Det_PN}. By Fig.~\ref{fig30_Det_PN}, if transition $s_1
	\xrightarrow[]{t_4}s_1$ is controllable, then after disabling this transition,
	in FSA $\Scal_5$, there is no cycle with nonempty label sequence reachable from
	the unique initial state $s_1$ of $\Scal_5$, and then FSA $\Scal$ becomes 
	$\omega$-$(2,2)$-detectable. Also by Fig.~\ref{fig30_Det_PN}, if transition $s_0
	\xrightarrow[]{t_3}s_1$ is controllable, then after disabling this transition,
	all transitions ended with a marked state of semiautomaton $\Scal'_2$ will be cut off,
	and hence $\Scal$ also becomes $\omega$-$(2,2)$-detectable. 
	\begin{figure}[htbp]
		\tikzset{global scale/.style={
    scale=#1,
    every node/.append style={scale=#1}}}
		\begin{center}
			\begin{tikzpicture}[rotate = 90, global scale = 1.0,
				>=stealth',shorten >=1pt,thick,auto,node distance=2.5 cm, scale = 1.0, transform shape,
	->,>=stealth,inner sep=2pt,
				every transition/.style={draw=red,fill=red,minimum width=1mm,minimum height=3.5mm},
				every place/.style={draw=blue,fill=blue!20,minimum size=7mm}]
				\tikzstyle{emptynode}=[inner sep=0,outer sep=0]
				\node[state, initial, initial where = below, rectangle] (s0s0) {$s_0,s_0$};
				\node[state, dotted] (s4s4) [above of = s0s0] {$s_4,s_4$};
				\node[state, dotted] (s2s1) [left of = s0s0] {$s_2,s_1$};
				\node[state, dotted] (s1s2) [above of = s2s1] {$s_1,s_2$};
				\node[state, dotted] (s2s2) [below of = s2s1] {$s_2,s_2$};
				\node[state, dotted] (s1s1) [below of = s2s2] {$s_1,s_1$};

				\path[->, dotted]
				(s0s0) edge node [below, sloped] {$\begin{matrix}(t_5,t_6)\\(t_6,t_5)\end{matrix}$} (s4s4)
				(s0s0) edge node [above, sloped] {$(t_3,t_3)$} (s1s2)
				(s0s0) edge node [above, sloped] {$(t_3,t_3)$} (s2s1)
				(s0s0) edge node [above, sloped] {$(t_3,t_3)$} (s1s1)
				(s0s0) edge node [above, sloped] {$(t_3,t_3)$} (s2s2)
				(s1s2) [bend right] edge node [above, sloped] {$(t_4,t_4)$} (s1s1.180)
				(s2s1) edge [bend right] node [below, sloped] {$(t_4,t_4)$} (s1s1.145)
				(s2s2) edge [bend right] node [above, sloped] {$(t_4,t_4)$} (s1s1)
				;
				\path[->, dotted] (s1s1) edge [loop below] node [below, sloped] {$(t_4,t_4)$} (s1s1);
				\path[->] (s0s0) edge [loop below] node [below, sloped] {$\begin{matrix}(t_1,t_1)\\(t_2,\ep)\\(\ep,t_2)\end{matrix}$} (s0s0);


				\draw[-,dotted] (1.3,-6.5) edge (1.3,3.0);
				\node at (0.0,-6.0) {$\Scal'_{1}$};

				\node[state] (s0s0') [right of = s0s0] {$s_0,s_0$};
				\node[state, rectangle] (s1s2') [right of = s0s0'] {$s_1,s_2$};
				\node[state, rectangle] (s2s1') [below of = s1s2'] {$s_2,s_1$};

				\path[->]
				(s0s0) edge node [above, sloped] {$(t_1,t_1)$} (s0s0')
				(s0s0') edge node [above, sloped] {$(t_3,t_3)$} (s1s2')
				(s0s0') edge node [above, sloped] {$(t_3,t_3)$} (s2s1')
				(s0s0) edge [bend left] node [above, sloped] {$(t_3,t_3)$} (s1s2')
				(s0s0) edge [bend right] node [above, sloped] {$(t_3,t_3)$} (s2s1')
				;

				\draw[-,dotted] (5.9,-6.5) edge (5.9,3.0);
				\node at (4.0,-6.0) {$\Scal'_{2}$};

				\node[state, rectangle] (s1s1'') [right of = s1s2'] {$s_1,s_1$};

				\path[->]
				(s1s2') edge node [above, sloped] {$(t_4,t_4)$} (s1s1'')
				(s2s1') edge node [above, sloped] {$(t_4,t_4)$} (s1s1'')
				;

				\draw[-,dotted] (8.9,-6.8) edge (8.9,3.0);
				\node at (7.5,-6.0) {$\Scal'_{3}$};
				
				\node[state, rectangle] (s1s1''') [right of = s1s1''] {$s_1,s_1$};

				\path[->]
				(s1s1'') edge node [above, sloped] {$(t_4,t_4)$} (s1s1''')
				;

				\draw[-,dotted] (10.9,-6.8) edge (10.9,3.0);
				\node at (10.0,-6.0) {$\Scal'_{4}$};

				\node[state, initial, rectangle] (s1'''') [right of = s1s1'''] {$s_1$};

				\path[->]
				(s1'''') edge [loop below] node [below, sloped] {$t_4$} (s1'''');

				\node at (12.0,-6.0) {$\Scal_{5}$};

			\end{tikzpicture}
			\caption{A figure used for enforcing $\omega$-$(2,2)$-detectability of
			FSA $\Scal$ shown in Fig.~\ref{fig28_Det_PN}, where rectangle states denote 
			the marked ones, dotted states are those that have been removed
			in Step~\ref{item8_synthesis} (i.e., state $(s_1,s_1)$ of semiautomaton
			$\Scal'_4$ is not reachable from any of the dotted states).}
			\label{fig30_Det_PN}
		\end{center}
	\end{figure}
\end{example}


\subsection{Synthesizing $*$-$(k_1,k_2)$-detectability}

The procedure of synthesizing $*$-$(k_1,k_2)$-detectability is quite similar 
to that of synthesizing $\omega$-$(k_1,k_2)$-detectability. We can also follow
Steps~\ref{item3_synthesis} through~\ref{item9_synthesis} to synthesize 
$*$-$(k_1,k_2)$-detectability, only if we remove Step~\ref{item7_synthesis}, 
and also remove all modifications in Step~\ref{item9_synthesis} related to 
FSA $\Scal_{1+k_1+k_2}$. That is, Step~\ref{item9_synthesis} is changed to the one
as follows:
\begin{itemize}
	\item Choose controllable transitions in $\Scal'_{k_1+k_2}, \dots,
			\Scal'_{\min\{1,k_1\}}$ in order to disable to cut off transition sequences
			from $\Scal'_{\min\{1,k_1\}}$ to $\Scal'_{k_1+k_2}$.
\end{itemize}
See the following example.

\begin{example}\label{exam9_Det_PN}
	Reconsider FSA $\Scal$ shown in Fig.~\ref{fig28_Det_PN} and 
	$\Acc(\CCa(\Scal))$ shown in Fig.~\ref{fig29_Det_PN}. In $\Acc(\CCa(\Scal))$, there is a
	transition sequence $(s_0,s_0)\xrightarrow[]{(t_1,t_1)}(s_0,s_0)\xrightarrow[]{(t_3,t_3)}
	(s_1,s_2)\xrightarrow[]{(t_4,t_4)}(s_1,s_1)\xrightarrow[]{(t_4,t_4)}(s_1,s_1)$ 
	with all events observable, where $(s_0,s_0)$ is an initial state, $(s_0,s_0)\xrightarrow[]
	{(t_1,t_1)}(s_0,s_0)$ and $(s_1,s_1)\xrightarrow[]{(t_4,t_4)}(s_1,s_1)$ are observable 
	self-loops, and $s_1\ne s_2$.
	Then by the proof of Theorem~\ref{thm10'_Det_PN}, we have $\Scal$
	is not $*$-$(k_1,k_2)$-detectable for any $k_1,k_2\in\N$. 

	We firstly enforce $*$-$(2,2)$-detectability of $\Scal$. To this end, we only need 
	to consider semiautomaton $\bigcup_{i=1}^{4}\Scal'_i$ in Fig.~\ref{fig30_Det_PN}. 
	Similarly to Example~\ref{exam8_Det_PN},
	we do not need to consider dotted states and transitions.
	By Fig.~\ref{fig30_Det_PN}, if transition $s_1
	\xrightarrow[]{t_4}s_1$ is controllable, then after disabling this transition,
	the unique transition from $\Scal'_3$ to $\Scal'_4$ will be cut off, then
	$\Scal$ becomes $*$-$(2,2)$-detectable. Similarly, if transition $s_0
	\xrightarrow[]{t_3}s_1$ is controllable, then after disabling this transition,
	$\Scal$ also becomes $*$-$(2,2)$-detectable.

	Secondly, we enforce $*$-$(1,2)$-detectability of $\Scal$. Following the above 
	steps, we draw as in Fig.~\ref{fig31_Det_PN}. By Fig.~\ref{fig31_Det_PN},
	if transition $s_1
	\xrightarrow[]{t_4}s_1$ is controllable, then after disabling this transition,
	the unique transition from $\Scal'_2$ to $\Scal'_3$ will be cut off, then
	FSA $\Scal$ becomes $*$-$(1,2)$-detectable. Similarly, if transition $s_0
	\xrightarrow[]{t_3}s_1$ is controllable, then after disabling this transition,
	$\Scal$ also becomes $*$-$(1,2)$-detectable. We also have if only transition
	$t_0\xrightarrow[]{t_1}s_0$ is disabled, then $\Scal$ is still not 
	$*$-$(1,2)$-detectable.

	Thirdly, we enforce $*$-$(0,2)$-detectability of $\Scal$. Following the above 
	steps, we draw the same picture as the case for enforcing $*$-$(1,2)$-detectability
	as in Fig.~\ref{fig31_Det_PN} except that renaming $\Scal'_1,\Scal'_2,\Scal'_3$
	to $\Scal'_0,\Scal'_1,\Scal'_2$. Then similar results could be obtained.
	\begin{figure}[htbp]
		\tikzset{global scale/.style={
    scale=#1,
    every node/.append style={scale=#1}}}
		\begin{center}
			\begin{tikzpicture}[
				>=stealth',shorten >=1pt,thick,auto,node distance=2.5 cm, scale = 1.0, transform shape,
	->,>=stealth,inner sep=2pt,
				every transition/.style={draw=red,fill=red,minimum width=1mm,minimum height=3.5mm},
				every place/.style={draw=blue,fill=blue!20,minimum size=7mm}]
				\tikzstyle{emptynode}=[inner sep=0,outer sep=0]

				\node[state, initial] (s0s0') {$s_0,s_0$};
				\node[state, rectangle] (s1s2') [right of = s0s0'] {$s_1,s_2$};
				\node[state, rectangle] (s2s1') [below of = s1s2'] {$s_2,s_1$};

				\path[->] (s0s0') edge [loop below] node [below, sloped] {$\begin{matrix}(t_1,t_1)\\(t_2,\ep)\\(\ep,t_2)\end{matrix}$} (s0s0')
				(s0s0') edge node [above, sloped] {$(t_3,t_3)$} (s1s2')
				(s0s0') edge node [above, sloped] {$(t_3,t_3)$} (s2s1')
				;

				\draw[-,dotted] (3.9,-4.0) edge (3.9,1.0);
				\node at (2.0,-3.8) {$\Scal'_{1}$};

				\node[state, rectangle] (s1s1'') [right of = s1s2'] {$s_1,s_1$};

				\path[->]
				(s1s2') edge node [above, sloped] {$(t_4,t_4)$} (s1s1'')
				(s2s1') edge node [above, sloped] {$(t_4,t_4)$} (s1s1'')
				;

				\draw[-,dotted] (6.5,-4.0) edge (6.5,1.0);
				\node at (5.3,-3.8) {$\Scal'_{2}$};
				
				\node[state, rectangle] (s1s1''') [right of = s1s1''] {$s_1,s_1$};

				\path[->]
				(s1s1'') edge node [above, sloped] {$(t_4,t_4)$} (s1s1''')
				;

				\draw[-,dotted] (8.5,-4.0) edge (8.5,1.0);
				\node at (7.5,-3.8) {$\Scal'_{3}$};

			\end{tikzpicture}
			\caption{A figure used for enforcing $*$-$(1,2)$-detectability 
			of FSA $\Scal$ shown in Fig.~\ref{fig28_Det_PN}, where rectangle states still denote the 
			marked ones.} 
			\label{fig31_Det_PN}
		\end{center}
	\end{figure}
\end{example}

\section{Concluding remarks}\label{sec4:conc}

In this paper, we characterized a notion of $K$-delayed strong detectability for finite-state automata,
and used a novel concurrent-composition method to give a quartic polynomial-time verification algorithm
for the notion without any assumption. In addition, we obtained an upper bound for $K$, and also studied 
two other similar notions of $(k_1,k_2)$-detectability and $(k_1,k_2)$-D-detectability firstly studied in 
\cite{Shu2013DelayedDetectabilityDES}.
We also found quartic polynomial-time
verification algorithms for the other two notions without any assumption,
which strengthen the sixtic polynomial-time verification algorithms given in
\cite{Shu2013DelayedDetectabilityDES} based on two widely-used assumptions.
In addition, based on our obtained results, we found polynomial-time algorithms for enforcing 
the above notions of delayed strong detectability, which are more effective than the exponential-time
synthesis algorithms in the supervisory control framework in the literature.

\section*{Discussion on diagnosability}

Next we briefly show that a slight variant of the concurrent composition can be used to
obtain a polynomial-time verification algorithm for the notion of diagnosability of FSAs
(originally studied in \cite{Sampath1995DiagnosabilityDES}, exponential-time verification
algorithms under Assumption \ref{assum1_Det_PN} are given)
without any assumption, which strengthens the polynomial-time verification algorithms given in 
\cite{Jiang2001PolyAlgorithmDiagnosabilityDES,Yoo2002DiagnosabiliyDESPTime}
under several assumptions. 


Consider an FSA $\Scal=(X,T,X_0,\to,\Sig,\ell)$, where additionally we partition the set $T$ of 
events into disjoint one subset $T_n$ of normal events and the other subset $T_f$ of faulty events.
The notion of diagnosability studies whether one can make sure a faulty event has occurred 
after occurrences of a finite number of events. Next we state the notion of diagnosability.

\begin{definition}\label{def1_Diag_FSA}
	An FSA $\Scal$ is called {\it diagnosable with respect to $T_f$} if there is $K\in\N$,
	for every $s\in(T^*T_f)\cap\Tt(\Scal)$,
	for every $t\in T^*$ satisfying $st\in\Tt(\Scal)$ and $|t|\ge K$,
	$w\in \Tt(\Scal)$ and $\ell(w)=\ell(st)$ imply that
	$T_f\in w$ (which means that $w$ contains a faculty event).
\end{definition}

By direction observation, one sees the following proposition on the notion of diagnosability.

\begin{proposition}\label{prop1_diag_FSA}
	An FSA $\Scal$ is not diagnosable with respect to $T_f$ if and only if for all $K\in\N$, 
	there is $s\in(T^*T_f)\cap\Tt(\Scal)$, $t\in T^*$, and $w\in\Tt(\Scal)$
	such that $st\in\Tt(\Scal)$, $|t|\ge K$,
	$\ell(w)=\ell(st)$, and $T_f\notin w$ (which means that $w$ contains no faculty event).
\end{proposition}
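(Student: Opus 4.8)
The plan is to observe that the proposition is \emph{exactly} the logical negation of Definition~\ref{def1_Diag_FSA}, so the proof is pure predicate-logic bookkeeping, with no automaton-theoretic input beyond the meaning of $\Tt(\Scal)$ already fixed in the preliminaries. Write $P(K)$ for the assertion obtained from Definition~\ref{def1_Diag_FSA} by deleting the leading quantifier ``there is $K\in\N$''; that is, $P(K)$ says: for every $s\in(T^*T_f)\cap\Tt(\Scal)$ and every $t\in T^*$ with $st\in\Tt(\Scal)$ and $|t|\ge K$, every $w\in\Tt(\Scal)$ with $\ell(w)=\ell(st)$ satisfies $T_f\in w$. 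Thus $\Scal$ is diagnosable with respect to $T_f$ if and only if $(\exists K\in\N)\,P(K)$, and hence $\Scal$ is not diagnosable if and only if $(\forall K\in\N)\,\neg P(K)$.

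Then I would carry out the negation of $P(K)$ in three routine steps. First, the (implicit, restricted) universal quantifier over $s$ negates to an existential, keeping $s\in(T^*T_f)\cap\Tt(\Scal)$ as a conjunct. Second, the restricted universal quantifier over $t$ --- of the shape $(\forall t)[(st\in\Tt(\Scal)\wedge|t|\ge K)\implies\cdots]$ --- negates to $(\exists t)[st\in\Tt(\Scal)\wedge|t|\ge K\wedge\neg(\cdots)]$. Third, the remaining universally quantified implication $(\forall w\in\Tt(\Scal))[\ell(w)=\ell(st)\implies T_f\in w]$ negates to $(\exists w\in\Tt(\Scal))[\ell(w)=\ell(st)\wedge T_f\notin w]$. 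Splicing these together, $\neg P(K)$ reads: there exist $s\in(T^*T_f)\cap\Tt(\Scal)$, $t\in T^*$, and $w\in\Tt(\Scal)$ with $st\in\Tt(\Scal)$, $|t|\ge K$, $\ell(w)=\ell(st)$, and $T_f\notin w$. Quantifying $\neg P(K)$ over all $K\in\N$ yields verbatim the statement of the proposition.

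The argument has essentially no obstacle; the only things to be careful about are (i) correctly distinguishing, at each universal quantifier, the ``restriction'' part (which survives negation as a conjunct) from the consequent, so that none of the side conditions $s\in(T^*T_f)\cap\Tt(\Scal)$, $st\in\Tt(\Scal)$, $|t|\ge K$ is silently dropped, and (ii) remembering that the outermost $\exists K$ flips to $\forall K$. For completeness I would also remark that $P(K)\Rightarrow P(K')$ whenever $K\le K'$ (the hypothesis $|t|\ge K$ only gets more restrictive), so that $(\exists K)\,P(K)$ is equivalent to ``$P(K)$ for all sufficiently large $K$''; this monotonicity is not needed for the equivalence but clarifies why the ``$\forall K$'' phrasing is the natural dual. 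No further machinery --- observers, concurrent composition, and the like --- is invoked here.
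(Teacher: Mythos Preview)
Your proposal is correct and matches the paper's approach: the paper simply remarks ``By direct observation, one sees the following proposition'' and gives no further argument, since the statement is precisely the logical negation of Definition~\ref{def1_Diag_FSA}. Your writeup merely makes that negation explicit, which is fine.
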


Now consider a slight variant $\CCatn(\Scal)$ of concurrent composition $\CCa(\Scal)$ of $\Scal$ that 
is obtained from $\CCa(\Scal)$ by changing $T_o'$ to $\{(\breve{t},\breve{t}')|\breve{t}\in T,\breve{t}'\in T_n,
\ell(\breve{t})=\ell(\breve{t}')\in\Sig\}$ and changing $T_{\ep}'$ to $T_{\ep l}'\cup T_{\ep r}'$,
where $T'_{\ep l}=\{(\breve{t},\epsilon)|\breve{t}\in T,\ell(\breve{t})=\epsilon\}$,
$T'_{\ep r}=\{(\epsilon,\breve{t})|\breve{t}\in T_n,\ell(\breve{t})=\epsilon\}$.
Then transition relation $\to'$ is also changed accordingly.
Automaton $\CCatn(\Scal)$ collects every pair of transition sequences of $\Scal$
with the same label sequence,
where the left component is an arbitrary possible transition sequence (among total transition
sequences of $\Scal$, 
reflected on $t$ of the superscript of $\CCatn$) but the right component is a 
transition sequence under only normal event sequences (reflected on $n$ of the superscript 
of $\CCatn$). Then by 
a similar proof to that of Theorem~\ref{thm10'_Det_PN}, by the finiteness of number of 
states of $\Scal$, we obtain the following result.

\begin{theorem}\label{thm1_diag_FSA}
	An FSA $\Scal=(X,T,X_0,\to,\Sig,\ell)$ is not diagnosable with respect to $T_f$
	if and only if in $\CCatn(\Scal)=(X',T',X_0',\to')$,
	\begin{subequations}\label{eqn83_Det_PN}
		\begin{align}
			&\text{there exists a transition sequence }\nonumber\\
			&x_0'\xrightarrow[]{s_1'}x_1'\xrightarrow[]{s_2'}x_{2}'\xrightarrow[]{s_3'}x_{2}'
			\text{ satisfying}\label{eqn83_1_Det_PN}\\
			&x_0'\in X_0';x_1',x_{2}'\in X',x_1'(L)\in T_f;\label{eqn83_2_Det_PN}\\
			&s_1',s_2'\in(T')^*, s_3'\in(T')^+\setminus(T'_{\ep r})^+.\label{eqn83_3_Det_PN}
		\end{align}
	\end{subequations}
\end{theorem}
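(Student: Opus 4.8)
The plan is to run the same lasso-style argument that underlies the proof of Theorem~\ref{thm10'_Det_PN} for $*$-$(k_1,k_2)$-detectability, with two adaptations: the distinguished ``splitting'' point will mark the occurrence of a faulty event on the left component (in place of a pair of distinct states), and the trailing cycle will only be required to \emph{advance the left component} — so that the suffix $t$ can be pumped to arbitrary length — rather than to produce observable labels. The one auxiliary fact I would record first is a routine realization lemma for $\CCatn(\Scal)$, proved by induction on transition sequences exactly as for $\CCa(\Scal)$: paths of $\CCatn(\Scal)$ from $X_0'$ correspond to pairs $(u,v)$ of event sequences generated by $\Scal$ with $\ell(u)=\ell(v)$, where $u$ is arbitrary and $v$ uses only normal events; along such a path $s'$ one has $s'(L)=u$ and $s'(R)=v$, up to the harmless asynchronous interleaving of the independent unobservable moves of the two components. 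Throughout I read the condition ``$x_1'(L)\in T_f$'' in \eqref{eqn83_2_Det_PN} as: the last event of the left component $s_1'(L)$ of the first leg is faulty.

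For the ``if'' direction, assume \eqref{eqn83_Det_PN} holds. From $x_0'\xrightarrow[]{s_1'}x_1'$ I get $s:=s_1'(L)\in(T^*T_f)\cap\Tt(\Scal)$, together with a faulty-free $s_1'(R)\in\Tt(\Scal)$ satisfying $\ell(s_1'(R))=\ell(s)$. Repeating the cycle $n$ times, for every $n\in\Z_{+}$ I obtain $s_1'(L)s_2'(L)(s_3'(L))^n\in\Tt(\Scal)$ and a faulty-free word $w_n:=s_1'(R)s_2'(R)(s_3'(R))^n\in\Tt(\Scal)$ with $\ell(w_n)=\ell(s_1'(L)s_2'(L)(s_3'(L))^n)$. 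Setting $t_n:=s_2'(L)(s_3'(L))^n$, the hypothesis $s_3'\in(T')^+\setminus(T'_{\ep r})^+$ forces $|s_3'(L)|\ge 1$, so $|t_n|\to\infty$; hence for every $K\in\N$ some $t_n$ has $|t_n|\ge K$, and $(s,t_n,w_n)$ witnesses non-diagnosability through Proposition~\ref{prop1_diag_FSA}.

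For the ``only if'' direction, assume $\Scal$ is not diagnosable and invoke Proposition~\ref{prop1_diag_FSA} with $K:=|X|^2+1$: there are $s\in(T^*T_f)\cap\Tt(\Scal)$, $t\in T^*$ with $st\in\Tt(\Scal)$ and $|t|\ge K$, and a faulty-free $w\in\Tt(\Scal)$ with $\ell(w)=\ell(st)$. By the realization lemma, the pair $(st,w)$ is carried by a path $\pi$ of $\CCatn(\Scal)$ from $X_0'$. Let $x_1'$ be the state of $\pi$ reached immediately after the transition whose left component is the final (faulty) event of $s$; this fixes $x_0'\xrightarrow[]{s_1'}x_1'$ with $s_1'(L)=s$. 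Along the part of $\pi$ after $x_1'$, exactly the $|t|\ge|X|^2+1$ events of $t$ appear as transitions with a non-empty left component; since $\CCatn(\Scal)$ has at most $|X|^2$ states, two of the states reached right after such transitions coincide, say the $i$-th and the $j$-th with $i<j$; call it $x_2'$. Taking $s_2'$ to be the segment of $\pi$ from $x_1'$ to that $i$-th occurrence of $x_2'$ and $s_3'$ the segment between the $i$-th and $j$-th occurrences yields a non-empty cycle $s_3'$ at $x_2'$ containing at least one transition with non-empty left component, i.e.\ $s_3'\in(T')^+\setminus(T'_{\ep r})^+$; thus $x_0'\xrightarrow[]{s_1'}x_1'\xrightarrow[]{s_2'}x_2'\xrightarrow[]{s_3'}x_2'$ realizes \eqref{eqn83_Det_PN}.

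The step I expect to be the main obstacle is the bookkeeping in the ``only if'' direction: making the pair-to-path correspondence of the realization lemma precise in the presence of the asynchronous unobservable moves of the two components, and then extracting a cycle that is guaranteed to move the \emph{left} component rather than spinning only on right-side $(\ep,\cdot)$-transitions (which would fail to pump $|t|$). Once the lemma and this extraction are set up carefully, the remainder is a direct transcription of the argument for Theorem~\ref{thm10'_Det_PN}.
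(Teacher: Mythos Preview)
Your argument is correct and is essentially the detailed execution of what the paper only sketches: the paper merely says the result follows ``by a similar proof to that of Theorem~\ref{thm10'_Det_PN}, by the finiteness of number of states of $\Scal$'', without spelling out either direction. Your two directions match this intended template exactly—the ``if'' direction by pumping the cycle (using $s_3'\notin(T'_{\ep r})^+$ to force $|s_3'(L)|\ge 1$), and the ``only if'' direction by pigeonhole on the at most $|X|^2$ states of $\CCatn(\Scal)$ along the left-advancing transitions of the $(st,w)$ path. Your reading of the ill-typed condition ``$x_1'(L)\in T_f$'' as ``the last event of $s_1'(L)$ is faulty'' is also the intended one; the paper's formulation is a typo.
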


Similarly to the proof of Theorem~\ref{thm10'_Det_PN}, one can prove that the satisfiability of 
\eqref{eqn83_Det_PN} can be verified in linear time of the size of $\CCatn(\Scal)$. By
using this argument, one can also obtain polynomial-time synthesis algorithms for enforcing
diagnosability. The synthesis algorithm for enforcing a stronger version of diagnosability
(obtained by pulling out ``there is $K\in\N$'' in Definition \ref{def1_Diag_FSA} and putting 
``with respect to a given $K\in\N$'' before ``if'')
under the linveness assumption given in \cite{Yin2016uniform} is of exponential-time complexity.

Definition~\ref{def1_Diag_FSA} is exactly \cite[Definition 5.2]{Cabasino2012DiagnosabilityPetriNet},
where it is called {\it diagnosability in $K$ steps}.
In \cite{Cabasino2012DiagnosabilityPetriNet}, the focus is on labeled Petri nets,
and another notion of diagnosability 
(the following Definition~\ref{def2_Diag_FSA}) has also
been studied, and it is pointed out that these two notions are equivalent for FSAs
\cite[Proposition 5.3]{Cabasino2012DiagnosabilityPetriNet},
although not equivalent for labeled Petri nets.

\begin{definition}\label{def2_Diag_FSA}
	An FSA $\Scal$ is called {\it diagnosable with respect to $T_f$} if for every $s\in(T^*T_f)\cap\Tt(\Scal)$,
	there is $K_s\in\N$ such that, for every $t\in T^*$ satisfying $st\in\Tt(\Scal)$ and $|t|\ge K_s$,
	$w\in \Tt(\Scal)$ and $\ell(w)=\ell(st)$ imply that $T_f\in w$.
\end{definition}

By direction observation, one sees that an FSA $\Scal$ is not diagnosable with respect to
$T_f$ with respect to Definition~\ref{def2_Diag_FSA} if and only if
there is $s\in(T^*T_f)\cap\Tt(\Scal)$, for all $K\in\N$, there exist $t\in T^*$ and $w\in\Tt(\Scal)$
such that $st\in\Tt(\Scal)$, $|t|\ge K$, $\ell(w)=\ell(st)$, and $T_f\notin w$.
This implies that negation of
Definition~\ref{def2_Diag_FSA} is also equivalent to satisfiability of \eqref{eqn83_Det_PN},
hence we obtain a different proof for the equivalence of Definition~\ref{def2_Diag_FSA} 
and Definition~\ref{def1_Diag_FSA}.





%
%

\end{document}